\documentclass{article}
\usepackage[utf8]{inputenc}
\usepackage[T1]{fontenc}
\usepackage{lmodern} 
\usepackage{microtype} 
\usepackage[margin=1in]{geometry} 
\usepackage{setspace}
\usepackage{float}
\usepackage{xcolor} 
\usepackage{parskip} 

\usepackage{amsmath}   
\usepackage{amssymb}   
\usepackage{amsthm}    
\usepackage{mathtools} 
\usepackage{mathrsfs}  
\usepackage{tikz-cd}   
\usepackage{bm}        

\usepackage{titlesec} 
\titleformat{\section}{\Large\bfseries\color{blue!70!black}}{\thesection}{1em}{}
\titleformat{\subsection}{\large\bfseries\color{blue!60!black}}{\thesubsection}{1em}{}

\usepackage{enumitem} 
\setlist[itemize]{itemsep=0.2em, topsep=0.2em, leftmargin=1.5em}

\newcommand{\defeq}{\mathrel{\mathop:}=}

\usepackage{hyperref}
\hypersetup{
    colorlinks=true,
    linkcolor=blue!70!black, 
    filecolor=magenta,      
    urlcolor=cyan,
    pdftitle={Mathematical Notes},
    bookmarksnumbered=true, 
}
\usepackage[
  backend=biber,
  style=alphabetic,
  maxbibnames=999
]{biblatex}
\theoremstyle{plain}
\newtheorem{theorem}{Theorem}[section]
\newtheorem{lemma}[theorem]{Lemma}

\newtheorem{proposition}[theorem]{Proposition}

\newtheorem{definition}[theorem]{Definition}

\newtheorem{remark}[theorem]{Remark}
\newtheorem{conj}[theorem]{Conjecture}

\numberwithin{equation}{section}

\newcommand{\E}{\mathbb{E}}
\newcommand{\bbP}{\mathbb{P}}

\title{Generating and generalizing MSTD sets through Markov processes}
\author{Frank He (fsh30@cam.ac.uk), Karol Daniewski (kkd2@williams.edu), \\Steven J. Miller (sjm1@williams.edu)}
\date{July 2026}

\begin{document}

\maketitle

\section*{Abstract}
The classical More Sums Than Differences (MSTD) problem studies finite
sets \(A\subset \{0, 1, \dots, n\}\) for which \begin{equation*}\label{eq:abstract-mstd}
|A+A| \ >\ |A-A|,
\end{equation*} where
$A+A\ \defeq\ \{a_1+a_2:a_1,a_2\in A\}$ and $A-A\ \defeq\ \{a_1-a_2:a_1,a_2\in A\}$. As
addition is commutative and subtraction is not, it was conjectured that as
$n \to \infty$ almost all subsets $A$ chosen uniformly from the power set of $\{0, 1,
\dots, n\}$ (i.e., each $k$ is in $A$ with probability 1/2) are difference
dominated, and it was thus a surprise when Martin and O'Bryant proved a
positive percentage of sets are sum-dominant.

We greatly generalize this model by introducing a Markov-chain framework, where the
classical MSTD model is now just a special case. Let \((X_i)_{i=0}^n\) be
a stationary two-state Markov chain with transition matrix
\begin{equation*}\label{AbstractTransition}
P \ \defeq\ \begin{pmatrix}p&1-p\\1-q&q\end{pmatrix},
\quad\text{where }p,q\in(0,1).
\end{equation*}
We include $i$ in $A$ exactly when $X_i=1$, and define
\begin{equation*}\label{AbstractSet}
A \ \defeq\ \{i\in\{0,\ldots,n\}:X_i=1\}.
\end{equation*} The usual
independent Bernoulli model is recovered when consecutive inclusion
decisions are independent, equivalently when \(p=1-q\). In particular, the
uniformly random subset model corresponds to the case \(p=q=1/2\). Using the
fringe-middle method from the MSTD literature, we show that the middle
sums and differences are filled with high probability, so the comparison
between \(\left|A+A\right|\) and \(\left|A-A\right|\) is again governed by endpoint fringes. By fringe manipulation, we prove that the probabilities of sum-dominant, difference-dominant, and balanced sets tend to strictly positive limits
as $n\to\infty$. We also give numerical estimates of these three probabilities
for finite $n$ over a range of values of $p$ and $q$. Through combinatorial methods, we find a closed-form expression for $\mathbb{E}[|A_n-A_n|-|A_n+A_n|]$ as \(n\to\infty\).
\newpage
\section{Introduction}
\begin{definition}
    Let $A\subseteq S\defeq \{0,\ldots,n\}$. Define $A+A\defeq\{a+a'\mid a,a'\in A\}$, $A-A\defeq\{a-a'\mid a,a'\in A\}$. We call $A$
    \begin{enumerate}
    \item Sum-dominant (or More Sum Than Differences, MSTD) if $|A+A|> |A-A|$,
    \item Difference-dominant if $|A+A| < |A-A|$,
    \item Balanced if $|A+A| = |A-A|$.
\end{enumerate}
    
\end{definition}
\begin{remark}
    Define $[a,b]\defeq\{a,a+1,a+2,\ldots,b\}$ for integers $a\leq b$.
\end{remark}

Addition is commutative, whereas subtraction is not. So, with the same number of terms, one would expect more distinct differences to be produced. Nevertheless, Marica \cite{marica1969} proved the existence of MSTD sets, giving the following example:
\begin{equation}\{1,2,3,5,8,9,13,15,16\}.\end{equation}

While differences typically dominate, Martin and O'Bryant \cite{martin2006mstd} proved that a uniformly chosen $A\in \mathcal{P}(S)$ is expected to have $4$ more differences than sums as $n\to\infty$. However, in the same paper, they also proved a positive proportion are sum-dominant sets.

Zhao \cite{Zhao2011} extended this using results from Hegarty \cite{Hegarty2007} by proving that the proportion of sum-dominant sets has a positive limit. He also numerically showed that the limiting value is about $0.04\%$.

The principal explanation for these results is the distinction between the \emph{fringe} and the \emph{middle}. There are many ways to form a sum or difference closer to the middle of a range, giving a higher probability of realizing it. In contrast, extreme sums and differences depend on the inclusion in $A$ of elements close to $0$ and $n$. Hence, the comparison between $|A+A|$ and $|A-A|$ is determined by endpoint data.

This principle also underlies explicit constructions. For example, Miller, Orosz, and Scheinerman \cite{MillerOroszScheinerman2010} developed a fringe construction in which suitable endpoint configurations are joined by a sufficiently dense middle to produce MSTD sets.

Do et al. \cite{DoKulkarniMillerMoonWellens2015} extended these results to pairs of correlated random sets. They discovered the same existence of a positive limiting proportion of sum-dominant sets.

We extend this to a self-correlated process, where the inclusion of $i$ in $A$ depends on the inclusion status of $i-1$.

\begin{definition}\label{MarkovDefn}
    Let $(X_i)_{i\ge0}$ be a stationary two-state Markov chain on $\{0,1\}$ with transition matrix
    \begin{equation}
    P\ =\ \begin{pmatrix}
    p & 1-p \\
    1-q & q
\end{pmatrix}, \quad p,q\in(0,1).
    \end{equation}
\end{definition}
We include the element $i$ in $A$ exactly when $X_i=1$ in the above chain.
\begin{definition}
    Define $A_n \defeq \{i \in [0,n]: X_i = 1\}$, where $X_i$ is generated by the Markov chain in Definition \ref{MarkovDefn}.
\end{definition}

\begin{remark}
    Note that for $p = q = {1}/{2}$ the problem reduces to a set generated by independent fair coin tosses. Furthermore, for $p = 1 - q$, the problem is equivalent to generating the set with a biased coin with inclusion probability $q$.
\end{remark}

The Markov process with transition matrix $P$ is aperiodic and irreducible because $p,q\in(0,1)$, and the finite state space gives positive recurrence. Hence, it admits a stationary distribution $\pi$, which is
\begin{equation}\pi\ =\ \left(\frac{1-q}{2-p-q},\frac{1-p}{2-p-q}\right).\end{equation}
We take $X_0\sim\pi$ to start our chain in its stationary state.

It helps to state a version of the convergence-to-stationarity theorem, appearing in texts such as \cite{levin2017markov}. For probability distributions $\mu$ and $\nu$ on a countable state space
$\Omega$, define their total variation distance by
\begin{equation}\label{TotVarDefn}
 \|\mu-\nu\|_{\mathrm{TV}}
 \ \defeq\ \frac12\sum_{x\in\Omega}|\mu(x)-\nu(x)|.
\end{equation}
\begin{theorem}[Convergence to stationarity]\label{ConvergenceTheorem}
Let $(X_n)_{n\geq 0}$ be an irreducible, aperiodic, positive recurrent Markov chain with transition matrix $P$ and stationary distribution $\pi$. Then, for every initial state $x$,
\begin{equation}
\|P^n(x,\cdot)-\pi\|_{\mathrm{TV}}\ \to\  0
\qquad \text{as } n \to \infty.
\end{equation}
If the state space is finite, then the convergence is geometric: there exist constants $C<\infty$ and $\rho\in(0,1)$ such that
\begin{equation}
\|P^n(x,\cdot)-\pi\|_{\mathrm{TV}}
\ \leq\  C\rho^n
\qquad \text{for all } n \geq  0.
\end{equation}

\end{theorem}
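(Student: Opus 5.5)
The plan is the standard coupling argument for the countable case, followed by a Doeblin-type minorization for the finite case.

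\emph{General case.} Fix an initial state $x$ and run two copies of the chain. The first, $(X_n)$, starts at $x$; the second, $(Y_n)$, starts from $\pi$, so $Y_n\sim\pi$ for every $n$. Let them move independently until the first meeting time $\tau\defeq\inf\{n: X_n=Y_n\}$, and identically afterwards. By the strong Markov property each marginal is still a copy of the chain with transition matrix $P$. The coupling inequality then gives
\begin{equation*}
\|P^n(x,\cdot)-\pi\|_{\mathrm{TV}}\ \le\ \bbP(X_n\ne Y_n)\ \le\ \bbP(\tau>n),
\end{equation*}
so it suffices to show that $\tau<\infty$ almost surely.

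\emph{Finiteness of $\tau$ (main obstacle).} Consider the product chain $(X_n,Y_n)$ with independent moves and kernel $P\otimes P$. Aperiodicity plus irreducibility imply that for each pair $y,z$ there is $N$ with $P^n(y,z)>0$ for all $n\ge N$. This uses the fact that a set of return times closed under addition with gcd $1$ contains all large integers. Hence $P\otimes P$ is irreducible. Moreover $\pi\otimes\pi$ is stationary for it, so the product chain is positive recurrent. Therefore it hits the diagonal $\{(z,z)\}$, indeed any fixed state $(z,z)$, almost surely from any start, and $\tau<\infty$ a.s. Letting $n\to\infty$ gives the first claim. I expect this step to require the most care: the number-theoretic lemma, and the recurrence of the product chain. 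Without aperiodicity the product chain can fail to be irreducible, which is why that hypothesis is needed.

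\emph{Finite state space.} By the same number-theoretic fact and finiteness of $\Omega$, there is one $r$ with $P^r(y,z)>0$ for all $y,z$. Set $\delta\defeq\min_{y,z}P^r(y,z)/\pi(z)>0$. This gives the minorization $P^r(y,\cdot)\ge\delta\,\pi(\cdot)$, so we can write $P^r=\delta\Pi+(1-\delta)Q$, where $\Pi$ has every row equal to $\pi$ and $Q$ is stochastic. Using $\Pi M=\Pi$ for stochastic $M$ and $M\Pi=\Pi$, induction gives $P^{rk}-\Pi=(1-\delta)^k(Q^k-\Pi)$. Since rows of stochastic matrices have TV distance at most $1$, this yields $\|P^{rk+j}(x,\cdot)-\pi\|_{\mathrm{TV}}\le(1-\delta)^k$. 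Taking $\rho=(1-\delta)^{1/r}$ and $C=(1-\delta)^{-1}$ gives the geometric bound for all $n\ge 0$.

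For the two-state chain of Definition \ref{MarkovDefn} one could instead diagonalize $P$ directly. Its eigenvalues are $1$ and $p+q-1$, so $|P^n(x,y)-\pi(y)|\le C|p+q-1|^n$, which gives $\rho=|p+q-1|$ when $p+q\ne1$; when $p+q=1$ the chain is already stationary after one step.
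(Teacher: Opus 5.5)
The paper gives no proof of this statement. It quotes it as a standard fact from \cite{levin2017markov} and only uses the finite, in fact two-state, case: for the asymptotic independence of the endpoint blocks, and for $|(P^j)_{ab}-\pi_b|=O(\beta^j)$ in the proof of Theorem~\ref{ConvergeFast}. Your proposal supplies the standard textbook argument, and it is correct in substance. For the countable case you use a Doeblin coupling through the irreducible, positive recurrent product chain. For the finite case you use the minorization $P^r\ge\delta\Pi$ and the splitting $P^r=\delta\Pi+(1-\delta)Q$, which is essentially the argument in \cite{levin2017markov}. So you fill in what the citation leaves implicit, including the role of aperiodicity through irreducibility of $P\otimes P$.

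Two details need tightening.

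First, $\Pi M=\Pi$ is false for an arbitrary stochastic $M$; it requires $\pi M=\pi$. Only $M\Pi=\Pi$ holds for every stochastic $M$. Your induction needs $\Pi M=\Pi$ only for two matrices:
\begin{itemize}
\item for $M=P^r$, where it is just stationarity;
\item for $M=Q$, where it follows from $\pi Q=(\pi P^r-\delta\pi)/(1-\delta)=\pi$.
\end{itemize}
So the identity $P^{rk}-\Pi=(1-\delta)^k(Q^k-\Pi)$ survives, but you should say why.

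Second, your $\delta=\min_{y,z}P^r(y,z)/\pi(z)$ equals $1$ exactly when $P^r=\Pi$. In that case $Q$ is undefined, $C=(1-\delta)^{-1}$ is infinite, and $\rho=0\notin(0,1)$. For the paper's chain this is precisely the Bernoulli case $p+q=1$, for every choice of $r$, since $P^r-\Pi=(p+q-1)^r(I-\Pi)$. Replace $\delta$ by $\min(\delta,1/2)$, which keeps the minorization valid.

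Your closing diagonalization is the most economical route to what the paper actually uses. It also makes the rate explicit, $\rho=|p+q-1|$ (any $\rho\in(0,1)$ when $p+q=1$), which pins down the $\beta$ in the proof of Theorem~\ref{ConvergeFast}.
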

Our main result mirrors the positive-probability results of Martin-O'Bryant \cite{martin2006mstd}.
\begin{theorem}\label{ManyMarkovSetsHaveMoreSums}
    Let $A_n\subseteq S \defeq \{0,\ldots,n\}$ be generated by the Markov chain $(X_i)$. Let $\rho_+^{(n)},\rho_-^{(n)},\text{ and }\rho_0^{(n)}$ be the probabilities that $A_n$ is Sum-dominant, Difference-dominant, and Balanced respectively. As $n\to\infty$, $\rho_+^{(n)},\rho_-^{(n)},\text{ and }\rho_0^{(n)}$ tend to strictly positive limits.
\end{theorem}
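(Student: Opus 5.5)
Our plan is to run the fringe--middle argument of Martin and O'Bryant. We first show that the middle is full with high probability, then reduce $|A_n+A_n|-|A_n-A_n|$ to a function of the two fringes, and finally use mixing of the chain to make the two fringes asymptotically independent.

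\textbf{Step 1: the middle fills.} Fix a fringe length $k$. We claim that, with probability at least $1-\varepsilon(k)$, where $\varepsilon(k)\to0$ uniformly in $n$, $A_n+A_n$ contains $[k,2n-k]$ and $A_n-A_n$ contains $[-(n-k),n-k]$. Consider a sum $s\in[k,n]$. It is represented by the roughly $s/2\ge k/2$ pairs $(i,s-i)$. Pick about $s/10$ of these pairs with $i$ spaced by gaps $\ge m$, so that the partners $s-i$ are spaced the same way. Then the events $\{X_i=X_{s-i}=1\}$ are nearly independent. Indeed, by Theorem~\ref{ConvergenceTheorem} and the Markov property, the joint law of $(X_i,X_{s-i})$ over the chosen indices differs from a product law by at most $C\rho^m$ per step. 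Each event has probability close to $\pi_1^2>0$. Sequential conditioning then bounds $\bbP(s\notin A_n+A_n)$ by $(1-\pi_1^2/2)^{c s}$ once $m$ is a large constant. Sums in $[n,2n-k]$ are handled symmetrically, and differences $d$ use the pairs $(i,i+d)$ in the same way. A union bound over $s\ge k$ gives $\varepsilon(k)\le C'\theta^k$ for some $\theta<1$. The care needed in this near-independence bookkeeping, when pairs straddle one another, is the main technical obstacle. A cleaner route is to condition on the chain at a sparse skeleton of times and use the uniform Doeblin minorization $P(x,1)\ge\min(1-p,q)>0$. This gives $\bbP(X_j=1\mid\text{everything else}\setminus\{X_j\})\ge\delta>0$, so each pair is present with conditional probability at least $\delta^2$ regardless of the other pairs, provided the chosen pairs use disjoint, non-adjacent indices.

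\textbf{Step 2: reduction to fringes.} On the good event, $|A_n+A_n|$ equals $2n+1$ minus the missing sums in $[0,k)$ and in $(2n-k,2n]$. The first set is determined by $L=(X_0,\dots,X_{k-1})$ and the second by $R=(X_{n-k+1},\dots,X_n)$. Likewise, $|A_n-A_n|=2n+1-2\cdot\#\{\text{missing }d\in(n-k,n]\}$, and whether $d$ is missing is determined by $(L,R)$ jointly. Hence $|A_n+A_n|-|A_n-A_n|=F_k(L,R)$ on the good event, for an explicit function $F_k$.

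\textbf{Step 3: limits.} As $n\to\infty$ with $k$ fixed, the law of $(L,R)$ converges in total variation to $\mu_k\otimes\mu_k'$ by Theorem~\ref{ConvergenceTheorem}. Here $\mu_k$ is the stationary law of $k$ consecutive states, and $\mu_k'$ is the same law for $R$. Therefore $\bbP(F_k(L,R)>0)$ has a limit $\alpha_k$, and $\limsup_n$ and $\liminf_n$ of $\rho_+^{(n)}$ both lie within $2\varepsilon(k)$ of $\alpha_k$. Letting $k\to\infty$ shows that $\alpha_k$ is Cauchy, so $\rho_+^{(n)}$ converges; the same holds for $\rho_-^{(n)}$ and $\rho_0^{(n)}$.

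\textbf{Step 4: positivity.} For each sign we exhibit one fixed fringe pair $(\ell,r)$ of some length $k_0$ whose outcome is forced. For MSTD, take Marica-type or Hegarty-type fringes, such as those of Miller--Orosz--Scheinerman, which give $F>0$ whenever the middle is full. For difference-dominance, take $\ell=r=$ all zeros except a single $1$ at each end, with an asymmetric choice. For balance, take $L$ and $R$ all ones, which gives zero missing elements. Every finite $0/1$ word has positive stationary probability because $p,q\in(0,1)$, and the event that the middle is full, conditioned on the fringes, still has probability at least $1-\varepsilon(k)$ by the minorization argument of Step 1 with $k\gg k_0$. This gives a lower bound $c\,\mu_{k_0}(\ell)\mu_{k_0}(r)>0$ for each of the three limits. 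The positivity argument must check that the chosen $k_0$-fringes are extended to length $k$ by fringes full of ones, which do not create new missing elements.
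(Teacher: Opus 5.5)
Your proposal is correct and follows essentially the paper's route: the conditional minorization bound (a pair of sites is all ones with probability at least $\delta$ given everything else) fills the middle up to $O(\theta^k)$; the fringe score reduces the sign to the endpoint blocks; asymptotic independence of those blocks plus squeezing gives the limits; and positivity comes from fixed fringe words padded by a long run of ones, which is exactly the paper's rich-fringe lemma with $[m,K]\subseteq L,R$. Two small points to tighten: the claim that padding by ones creates no new missing elements is automatic when both fringe words contain $0$ but can fail otherwise (then compute $\Delta$ on the padded $k$-fringes directly, as the paper does for $L_K^{\pm},R_K^{\pm}$), and the final lower bound should use the stationary probabilities of the padded length-$k$ words rather than $\mu_{k_0}(\ell)\mu_{k_0}(r)$.
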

\section{Markov-generated sets exhibit MSTD, MDTS, and balanced behavior}
We now prove Theorem \ref{ManyMarkovSetsHaveMoreSums}.
In order to evaluate sum and difference sets for large $n$, we take advantage of the fact that we can analyze their sizes by exclusively looking at left and right fringes.
\begin{definition}[Fringes]
    Let $A_n\subseteq S$ and fix some $K<n/2$. Define the \textit{left} and \textit{right $K$-fringes} respectively as 
    \begin{align}
         &L\ \defeq\  A_n\cap[0,K]&R\ \defeq\  (n-A_n)\cap[0,K]. 
    \end{align}
\end{definition}
Note that elements in $R$ record distances from $n$ of elements in $A_n$
lying in $[n-K,n]$.
\begin{definition}[Fringe score]
    For $A_n\subseteq S$ with left and right fringes as defined above, define its \textit{fringe score} 
    \begin{equation}\Delta(L,R)\ \defeq\ \lvert(L+L)\cap[0,K]\rvert+\lvert(R+R)\cap[0,K]\rvert-2\lvert(L+R)\cap[0,K]\rvert.\end{equation}
\end{definition}
The fringe score captures the difference between the cardinalities of the fringes of the sets $A_n+A_n$ and $A_n-A_n$. Under specified conditions, the fringe score gives the difference in cardinalities between $A_n+A_n$ and $A_n-A_n$. We use a characterization by Miller, Orosz, and Scheinerman \cite{MillerOroszScheinerman2010} and adapt Lemma 2.5 from Zhao \cite{Zhao2011}.
\begin{lemma}\label{FringeFormula}
    Let the middles of $A_n+A_n$ and $A_n-A_n$ be full, i.e. $[K+1,2n-K-1]\subseteq A_n+A_n$ and $[K+1-n,n-K-1]\subseteq A_n-A_n$, then
    \begin{equation}\Delta\ =\ \lvert A_n+A_n\rvert-\lvert A_n-A_n\rvert.\end{equation}
\end{lemma}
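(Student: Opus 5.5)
The plan is to compute $|A_n+A_n|$ and $|A_n-A_n|$ separately by cutting each ambient interval into a left fringe, a middle and a right fringe, and to identify each fringe piece with one term of $\Delta(L,R)$. Since $A_n\subseteq[0,n]$, we have $A_n+A_n\subseteq[0,2n]$, which we partition as $[0,K]\sqcup[K+1,2n-K-1]\sqcup[2n-K,2n]$. Likewise $A_n-A_n\subseteq[-n,n]$, which we partition as $[-n,-n+K]\sqcup[K+1-n,n-K-1]\sqcup[n-K,n]$. The hypothesis $K<n/2$ guarantees these pieces are genuinely disjoint, nonempty intervals, so the cardinalities add. By assumption both middles are full, so each contributes exactly $2n-2K-1$ elements. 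These contributions cancel in $|A_n+A_n|-|A_n-A_n|$, and it remains to match the fringes.

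For the sumset, a sum $a+a'\le K$ with $a,a'\ge 0$ forces $a,a'\le K$, i.e.\ $a,a'\in L$. Hence $(A_n+A_n)\cap[0,K]=(L+L)\cap[0,K]$. Symmetrically, $s\in[2n-K,2n]$ lies in $A_n+A_n$ iff $2n-s=(n-a)+(n-a')$ with both $n-a,n-a'\in[0,K]$. That is, $2n-s\in(R+R)\cap[0,K]$, and the reflection $s\mapsto 2n-s$ is a bijection. So
\begin{equation*}
|A_n+A_n|\ =\ (2n-2K-1)+|(L+L)\cap[0,K]|+|(R+R)\cap[0,K]|.
\end{equation*}

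For the difference set, take $d=a-a'\in[n-K,n]$. Then $n-d=(n-a)+a'$ is a sum of two nonnegative integers that is at most $K$. So $n-a\in R$ and $a'\in L$, giving $n-d\in(L+R)\cap[0,K]$. Conversely, any $r+\ell\le K$ with $r\in R$, $\ell\in L$ gives the difference $(n-r)-\ell=n-(r+\ell)\in[n-K,n]$. Hence the upper fringe of $A_n-A_n$ has exactly $|(L+R)\cap[0,K]|$ elements. Because $A_n-A_n=-(A_n-A_n)$, the lower fringe $[-n,-n+K]$ has the same count. This gives
\begin{equation*}
|A_n-A_n|\ =\ (2n-2K-1)+2|(L+R)\cap[0,K]|.
\end{equation*}
Subtracting the two displays yields $|A_n+A_n|-|A_n-A_n|=\Delta(L,R)$.

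The main point needing care is the bookkeeping of the fringe/middle split. One must check that the intervals are disjoint and exhaust the ranges, which uses $K<n/2$. One must also check that the reflections are correct bijections. In particular, the upper-fringe argument for differences needs that $d\ge n-K$ forces $a$ near $n$ and $a'$ near $0$ simultaneously; this follows from $n-d=(n-a)+a'\le K$ with both summands nonnegative. The rest is routine.
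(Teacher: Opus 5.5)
Your proof is correct and follows essentially the same route as the paper. Both split $[0,2n]$ and $[-n,n]$ into two $K$-fringes and a full middle of size $2n-2K-1$, identify the sum fringes with $(L+L)\cap[0,K]$ and $(R+R)\cap[0,K]$, and identify each difference fringe with $(L+R)\cap[0,K]$ via $(n-r)-\ell=n-(\ell+r)$ together with the symmetry of $A_n-A_n$. Your version additionally spells out the nonnegativity argument that forces both summands into the fringes and the converse inclusions, which the paper leaves implicit.
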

\begin{proof}
    Split the sumset $A_n+A_n\subseteq[0,2n]$ into components
    \begin{align}
        [0,K]&&&[K+1,2n-K-1]&[2n-K,2n].
    \end{align}
    Observe that low sums can be written in terms of the left fringes 
    \begin{equation}\lvert(A_n+A_n)\cap[0,K]\rvert\ =\ \lvert(L+L)\cap[0,K]\rvert.\end{equation}
    Similarly, we may determine the high sums by the right fringes, using
    \begin{equation}\lvert(A_n+A_n)\cap[2n-K,2n]\rvert\ =\ \lvert(R+R)\cap[0,K]\rvert.\end{equation}
    Hence, using our full middle, we have that 
    \begin{equation}\lvert A_n+A_n\rvert\ =\ \lvert(L+L)\cap[0,K]\rvert+\lvert(R+R)\cap[0,K]\rvert+2n-2K-1.\end{equation}
    Now consider the difference set $A_n-A_n\subseteq[-n,n]$, and split it into three components
    \begin{align}
        [-n,K-n]&&&[K+1-n,n-K-1]&[n-K,n].
    \end{align}
    Since the right fringe is reflected, for $l\in L$ and $r\in R$ we have
\begin{equation}\label{ReflectedFringeDifference}
 (n-r)-l \ =\ n-(l+r).
\end{equation}
The positive extreme differences are determined by $L+R$, and by symmetry, the negative extreme differences are also determined by $L+R$. Hence, once again using the full middle, we have
    \begin{equation}\lvert A_n-A_n\rvert\ =\ 2\lvert(L+R)\cap[0,K]\rvert+2n-2K-1.\end{equation}
    Subtracting the two identities proves the lemma.
\end{proof}
Lemma \ref{FringeFormula} allows us to compute the sum-difference gap given that we have a full middle. It remains to show that there is a high probability that the middle is full.
\begin{lemma}\label{FullMiddleIsLikely}
    Let $F_{K,n}$ be the event that $[K+1,2n-K-1]\subseteq A_n+A_n$ and $[K+1-n,n-K-1]\subseteq A_n-A_n$.

    There exist constants $C>0$, $0<\theta<1$, depending only on $p$ and $q$, such that for every $K<n/2$,
    \begin{equation}\bbP(F_{K,n}^c)\ \leq\  C\theta^K.\end{equation}
\end{lemma}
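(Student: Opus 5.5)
We use a union bound over the individual middle sums and differences, combined with a decorrelation estimate for the chain from Theorem \ref{ConvergenceTheorem}. By the symmetry $A_n\mapsto n-A_n$ (the reversed chain is again a stationary two-state chain with the same $\pi$) and the symmetry $d\mapsto -d$ of $A_n-A_n$, it suffices to bound the probability that a sum $s\in[K+1,n]$ is missing and that a difference $d\in[K+1,n-K-1]$ is missing. We then show each such event has probability at most $C_1\theta_1^{\min(s,\,n-d)}$, or a similar geometric quantity in the number of available representations, and sum the resulting geometric series to get $C\theta^K$.

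\textbf{Sums.} For $s\in[K+1,n]$, the pairs $(i,s-i)$ with $0\le i\le \lfloor s/2\rfloor$ all lie in $[0,n]$, and $s\notin A_n+A_n$ requires that no pair has both endpoints in $A_n$. The pairs are not independent, so we thin them: fix a gap $g$ (a constant depending on $p,q$) and select about $m\asymp s/g$ indices $i_1<i_2<\dots$ spaced $g$ apart, all at distance at least $g$ from $s/2$. We reveal the chain on the relevant coordinates in increasing position order. Conditionally on everything revealed so far, the probability that both $i_j$ and $s-i_j$ lie in $A_n$ is bounded below by a constant $c>0$. This holds because by Theorem \ref{ConvergenceTheorem} we have $P^g(x,1)\ge \pi(1)/2$ for $g$ large, uniformly in $x$; and even more simply, since $p,q\in(0,1)$, every entry of $P^t$ for $t\ge1$ is at least $\min(p,q,1-p,1-q)^{\,\cdot}$-type positive quantities bounded below uniformly in $t\ge1$, again by convergence to $\pi$. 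The conditioning involves the Markov property applied to an interleaved set of positions: we condition on the state at the nearest revealed coordinate to the left of each new coordinate, which is legitimate because the Markov property gives $\bbP(X_t=1\mid \mathcal F_{<t})=P^{t-t'}(X_{t'},1)$. Hence $\bbP(s\notin A_n+A_n)\le (1-c)^{m}\le C_1\theta_1^{s}$, and summing over $s\ge K+1$ gives $O(\theta_1^K)$.

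\textbf{Differences.} For $d\in[K+1,n-K-1]$ we use the pairs $(i,i+d)$ with $0\le i\le n-d$; there are $n-d+1\ge K+2$ of them. We pick $i$'s spaced $g$ apart and apply the same sequential conditioning argument, now revealing coordinates in the order $i_1,i_2,\dots$ and $i_1+d,i_2+d,\dots$ sorted by position. This gives $\bbP(d\notin A_n-A_n)\le C_1\theta_1^{\,n-d}$, and summing over $d\le n-K-1$ again yields $O(\theta_1^K)$.

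\textbf{Main obstacle.} The delicate step is making the conditional lower bound $c$ rigorous when the selected positions interleave (for sums, $i$'s on the left and $s-i$'s on the right; for differences, overlapping arithmetic progressions). This is where the spacing $g$ and a clean uniform lower bound $\min_{x,\,t\ge1}P^t(x,1)>0$ are needed. That minimum is positive because $P(x,1)>0$ for both $x$ and $P^t(x,1)\to\pi(1)>0$, and we state and use it explicitly. Once this holds, the gap $g$ can in fact be taken to be $1$, since any positive lower bound suffices. The remaining steps are routine summation of geometric series.
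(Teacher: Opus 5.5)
There is a gap at exactly the step you flag as delicate: you assert that step rather than prove it, and the justification you give does not work in your revealing scheme. Your outline otherwise matches the paper's: reversal and $d\mapsto -d$ symmetry, a union bound, a geometric bound in the number of disjoint representations, and geometric summation.

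\textbf{The gap.} Revealing coordinates in position order, the sum pairs are nested, $i_1<\dots<i_m<s-i_m<\dots<s-i_1$. So every left endpoint is revealed before any event $H_j=\{X_{i_j}=X_{s-i_j}=1\}$ is determined. To chain $\mathbb{P}(\bigcap_j H_j^c)\le(1-c)^m$ pair by pair, you need a lower bound on $\mathbb{P}(H_j\mid\text{past})$ at a stage where the earlier pairs are already decided. In your filtration that past already contains $X_{i_j}$, and on $\{X_{i_j}=0\}$ the conditional probability is $0$.

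So the per-coordinate bound $\gamma\defeq\min_{x,\,t\ge1}P^t(x,1)$ does not by itself give a per-pair bound, and ``any positive lower bound suffices'' needs an argument. (Incidentally, $\gamma\ge\min(1-p,q)$ directly, since $P^t(x,1)=\sum_y P^{t-1}(x,y)P(y,1)$.)

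\textbf{Two smaller points.} First, the positive middle differences are $[0,n-K-1]$, not $[K+1,n-K-1]$; the small ones are easy but must be included. Second, for differences, indices spaced by a fixed $g$ need not give disjoint pairs. For example, with $g=1$ the pairs $(0,d)$ and $(d,2d)$ share $d$ whenever $2d\le n$, so you must first extract a matching.

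\textbf{Repair within your framework.} For sums, condition first on $\mathcal{G}=\sigma(X_{i_1},\dots,X_{i_m})$. If $N$ left endpoints equal $1$, the event forces the $N$ matching right endpoints to be $0$. Revealing the right endpoints in increasing order gives $\mathbb{P}(\bigcap_j H_j^c\mid\mathcal{G})\le(1-\gamma)^N$. Then $\E[(1-\gamma)^N]=\E\prod_j(1-\gamma X_{i_j})\le(1-\gamma^2)^m$ by the same forward bound, using $\pi_1\ge\gamma$ for the first factor.

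For differences with $d\ge1$, keep only those $i\in[0,n-d]$ with $\lfloor i/d\rfloor$ even. This gives at least $(n-d+1)/2$ disjoint pairs, grouped in consecutive blocks $[2jd,(2j+2)d-1]$. Inside each block every left endpoint precedes every right endpoint. Applying the two-stage bound block by block, conditioning each block on the earlier ones, yields $(1-\gamma^2)^{(n-d+1)/2}$. The case $d=0$ only requires $A_n\neq\emptyset$.

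\textbf{Comparison with the paper.} The paper avoids this bookkeeping by conditioning on \emph{all} coordinates outside the current pair. Since all transitions are positive, $\mathbb{P}(X_i=X_j=1\mid X_k,\ k\neq i,j)\ge\delta>0$ uniformly. For disjoint pairs, the earlier failure events are measurable with respect to the outside of the current pair. Hence $(1-\delta)^M$ follows at once for any disjoint family, however the pairs interleave. Your one-sided bound is easier to verify; the two-sided one is what makes the combination step immediate.
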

\begin{proof}
    By $0<p,q<1$, every transition of the Markov process has strictly positive probability. Hence, there is a constant $\delta>0$ independent of $n$ such that the following statement holds:
    \begin{center}
        Pick any $i,j$ (not necessarily distinct), then conditioned on states $\{X_k=x_k\mid k\neq i,j\}$ for any $x_k$'s, \\$X_i=X_j=1$ with probability $\geq \delta$.
    \end{center}
    For pairwise disjoint representations, let $H_j$ be the event
that both entries of the $j$\textsuperscript{th} pair are $1$. Previous failures
depend only on states outside this pair. The above lower bound then gives
\begin{equation}
\mathbb P\left(\bigcap_{j=1}^M H_j^c\right)
\ \le\  (1-\delta)^M.
\end{equation}
The same argument applies to singleton representations.

    Fix a possible sum $s$. If $s=i+j$ and $X_i=X_j=1$, then $s\in A_n+A_n$. Any $s\in[K+1,2n-K-1]$ has at least $c\min(s+1,2n-s+1)$ pairwise disjoint representations of the form
    \begin{equation}s\ =\ i+j\end{equation}
    where $c>0$ is an absolute constant. We obtain this by extracting distinct unordered pairs from $s=0+s=1+(s-1)=\cdots=s+0$ when $s\leq n$, and from $s=(s-n)+n=\cdots=n+(s-n)$ when $s>n$. Since each unordered pair has at most $2$ orderings, we may choose, say, $c=1/2$.
    Therefore,
    \begin{equation}\bbP(s\notin A_n+A_n)\ \leq\  (1-\delta)^{c\min(s+1,2n-s+1)}\ =\ \theta_1^{\min(s+1,2n-s+1)}\end{equation}
    where $\theta_1\defeq (1-\delta)^c\in(0,1)$ is independent of $n$.
    Summing over all middle sums and using the standard union bound gives
    \begin{align}\bbP\big([K+1,2n-K-1]\not\subseteq A_n+A_n \big)\ &\leq\ 
    \sum_{s=K+1}^{2n-K-1}\theta_1^{\min(s+1,2n-s+1)}\notag\\
    &\leq\  2\sum_{s\geq K+1}\theta_1^s=C_1\theta_1^K
    \end{align}
    where $C_1=\frac{2\theta_1}{1-\theta_1}>0$ is also independent of $n$.

    We apply the same argument to differences. Take a fixed difference $d=i-j$. We have $d\in A_n-A_n$ if $X_i=X_j=1$.

    Consider a middle difference $d\in[K+1-n,n-K-1]$. There are at least $c(n-|d|)$ pairwise disjoint representations for $d$, counted in the same way as for sums, so
    \begin{equation}\bbP(d\notin A_n-A_n)\ \leq\  (1-\delta)^{c(n-|d|)}.\end{equation}
    Summing over all possible $d$'s in the middle gives
     \begin{equation}\bbP\big([K+1-n,n-K-1]\not\subseteq A_n-A_n \big)\ \leq\  C_2\theta_2^K\end{equation}
    for some $C_2>0$ and $0<\theta_2<1$, both independent of $n$, for the same reason as for the sumset case.

    Combining both bounds gives
    \begin{equation}\bbP(F_{K,n}^c)\ \leq\  C\theta^K\end{equation}
    for suitable $C>0,0<\theta<1$, both independent of $n$.
\end{proof}
This lemma provides the tool needed to prove that limits of $\rho_+^{(n)},\rho_-^{(n)},\text{ and }\rho_0^{(n)}$ exist as $n\to\infty$.
\begin{proposition}\label{ProportionLimitsExist}
    The following three limits exist:
    \begin{align}
        \rho_+\  &=\ \lim_{n\to\infty}\rho_+^{(n)}\\
        \rho_-\  &=\ \lim_{n\to\infty}\rho_-^{(n)}\\
        \rho_0\  &=\ \lim_{n\to\infty}\rho_0^{(n)}.
    \end{align}
\end{proposition}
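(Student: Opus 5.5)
We approximate each $\rho^{(n)}$ by a quantity that depends only on the fringes and is independent of $n$, and then let the fringe size grow. Fix $K$ and take $n>2K$. The key observation is that the joint law of $(L,R)$ does not depend on $n$ once $n$ is large compared with $K$, up to an exponentially small error. The left fringe $L$ is a function of $(X_0,\dots,X_K)$. Because the chain is stationary, $(X_0,\dots,X_K)$ always has the same law. The right fringe $R$ is a function of $(X_{n-K},\dots,X_n)$. By stationarity, this block also has a fixed law, namely the law of the reversed stationary chain on $K+1$ steps. These two blocks are separated by a gap of length $n-2K$.

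By the Markov property, conditioned on $X_K$, the right block depends only on $X_{n-K}$. The law of $X_{n-K}$ given $X_K=x$ is $P^{n-2K}(x,\cdot)$. Theorem \ref{ConvergenceTheorem} says this is within $C\rho^{\,n-2K}$ of $\pi$ in total variation. Hence the law $\mu_{K,n}$ of $(L,R)$ is within $C\rho^{\,n-2K}$ in total variation of a fixed product law $\mu_K=\nu_K^{L}\otimes\nu_K^{R}$, where the first block is a stationary chain and the second block is an independent stationary chain. In particular $\mu_{K,n}\to\mu_K$ as $n\to\infty$.

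Define $\rho_+^{(K,\infty)}\defeq\mu_K(\Delta(L,R)>0)$, and define $\rho_-^{(K,\infty)}$ and $\rho_0^{(K,\infty)}$ analogously with $\Delta<0$ and $\Delta=0$. On $F_{K,n}$, Lemma \ref{FringeFormula} gives $\operatorname{sign}(|A_n+A_n|-|A_n-A_n|)=\operatorname{sign}\Delta(L,R)$. Combining this with Lemma \ref{FullMiddleIsLikely} and the total variation bound above, we get
\begin{equation*}
\bigl|\rho_\ast^{(n)}-\rho_\ast^{(K,\infty)}\bigr|\ \le\ C\theta^K+C\rho^{\,n-2K}
\end{equation*}
for each $\ast\in\{+,-,0\}$.

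Letting $n\to\infty$ with $K$ fixed, both $\limsup_n\rho_\ast^{(n)}$ and $\liminf_n\rho_\ast^{(n)}$ lie within $C\theta^K$ of the single number $\rho_\ast^{(K,\infty)}$. Therefore $\limsup_n\rho_\ast^{(n)}-\liminf_n\rho_\ast^{(n)}\le 2C\theta^K$. Since $K$ is arbitrary, this difference is zero and the limit exists. As a by-product, $\rho_\ast^{(K,\infty)}$ is Cauchy in $K$ and converges to the same limit.

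The main obstacle is stating the decoupling step cleanly. It needs the reversed chain to describe the right fringe, or equivalently a direct computation of the joint law of the two blocks. It also needs care to ensure that the bound from Theorem \ref{ConvergenceTheorem} is uniform in the initial state $x$. Because the state space is finite, we can take the maximum of the constants over the two states. Everything else reduces to the two earlier lemmas.
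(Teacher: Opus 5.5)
Your proposal is correct and follows the paper's proof. You approximate $\rho_\ast^{(n)}$ by the fringe probability $\bbP(\Delta(L,R)\star 0)$ up to $\bbP(F_{K,n}^c)\le C\theta^K$, pass to the $n\to\infty$ limit of that fringe probability using asymptotic independence of the two endpoint blocks, and then squeeze $\limsup-\liminf\le 2C\theta^K$ and let $K\to\infty$. The one difference is that you make the decoupling step explicit and quantitative, via a total-variation bound $C\rho^{\,n-2K}$ to a product of two independent stationary blocks obtained from Theorem~\ref{ConvergenceTheorem} uniformly in the starting state, whereas the paper only asserts that the limit $\ell_K$ exists.
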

\begin{proof}
    By Lemma \ref{FringeFormula}, on $F_{K,n}$,
    \begin{equation}\lvert A_n+A_n\rvert - \lvert A_n - A_n \rvert\ =\ \Delta(L_K(A_n), R_K(A_n))\end{equation}
    where $L_K(A_n)$ and $R_K(A_n)$ denote the left and right $K$-fringes of $A_n$.

    Let $\star\in\{>,<,=\}$ be a sign. By Lemma \ref{FullMiddleIsLikely},
    \begin{align}
        \bigg| \bbP\big(\lvert A_n+A_n\rvert - \lvert A_n - A_n\rvert\star 0 \big) -\bbP\big(\Delta(L_K(A_n), R_K(A_n))\star 0\big)\ \bigg|&\ \leq\  \bbP(F_{K,n}^c)\notag\\
        &\ \leq C\theta^K.
    \end{align}
    The event $\Delta(L_K(A_n), R_K(A_n))\star 0$ is determined by the endpoint blocks $X_0,\ldots,X_K$ and $X_{n-K},\ldots,X_n$. The finite-state Markov chain is irreducible, aperiodic, and stationary, so the two endpoint blocks become asymptotically independent as $n\to\infty$. Hence \begin{equation}\ell_K\ \defeq\ \lim_{n\to\infty}\bbP(\Delta(L_K(A_n), R_K(A_n))\star 0)\end{equation} exists for each fixed $K$. We may now write 
    \begin{equation}\ell_K-C\theta^K\ \leq\ \liminf_{n\to\infty}\bbP\big(\lvert A_n+A_n\rvert - \lvert A_n - A_n\rvert\star 0 \big)\ \leq\ \limsup_{n\to\infty}\bbP\big(\lvert A_n+A_n\rvert - \lvert A_n - A_n\rvert\star 0 \big)\ \leq\  \ell_K+C\theta^K.\end{equation}
    In other words,
    \begin{equation}\limsup_{n\to\infty}\bbP\big(\lvert A_n+A_n\rvert - \lvert A_n - A_n\rvert\star 0 \big)-\liminf_{n\to\infty}\bbP\big(\lvert A_n+A_n\rvert - \lvert A_n - A_n\rvert\star 0 \big)\ \leq\  2C\theta^K.\end{equation}

    Now, $C\theta^K\to0$ as $K\to\infty$ and is independent of $n$, so for each $\star\in\{>,<,=\}$, 
    \begin{equation}\lim_{n\to\infty}\bbP(\lvert A_n+A_n\rvert - \lvert A_n - A_n\rvert\star 0 )\end{equation}
    exists.
\end{proof}
It remains to prove that the three limits are strictly positive. Our approach involves constructing a positive-probability family of sets with fixed fringes that guarantee one of the three behaviors. For this, we require an extension of Lemma $\ref{FullMiddleIsLikely}$.
\begin{lemma}[Full middle is likely with rich fringes]\label{FullMiddleIsLikelyWithRichFringes}
    Fix $m\geq 0$. Let $K\geq 2m+4$, and let sets $L,R\subseteq[0,K]$ both contain $[m,K]$.

    Let $E_{K,n}(L,R)$ be the event that $L$ and $R$ are the left and right $K$-fringes of $A_n$.

    Then there exist constants $D>0$ and $0<\Theta<1$ dependent only on $p,q,$ and $m$ such that for all $n\geq4K+2$,
    \begin{equation}\bbP(F^c_{K,n}\mid E_{K,n}(L,R))\ \leq\  D\Theta^K.\end{equation}
\end{lemma}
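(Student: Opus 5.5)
Conditioning on $E_{K,n}(L,R)$ fixes $X_0,\dots,X_K$ and $X_{n-K},\dots,X_n$, and by the Markov property the interior $X_{K+1},\dots,X_{n-K-1}$ is then a Markov bridge between the fixed boundary states $X_K=1$ and $X_{n-K}=1$ (both are $1$ since $K\in L$ and $K\in R$). So the plan is to split the middle sums and differences into those that can be certified using the fixed fringes alone, and those that need interior elements. For the latter we reuse the pairwise-disjoint-representation argument of Lemma \ref{FullMiddleIsLikely}. That argument still works under conditioning because of its uniform bound $\delta$: conditioned on all states outside a pair $\{i,j\}$ of interior indices (including the fixed fringe states), $X_i=X_j=1$ with probability at least $\delta$. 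Hence the product bound $(1-\delta)^M$ survives conditioning on $E_{K,n}(L,R)$.

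\textbf{Sums.} Since $[m,K]\subseteq L$, we get $[2m,2K]\subseteq L+L\subseteq A_n+A_n$. This covers the sums in $[K+1,2K]$, because $K+1\ge 2m$ when $K\ge 2m+4$. Symmetrically, $[2n-2K,2n-K-1]$ is covered by $R+R$ reflected. For $s\in[2K+1,2n-2K-1]$, I count representations $s=i+j$ with $i,j\in[K+1,n-K-1]$. There are at least $c\min(s-2K,\,2n-2K-s)$ disjoint ones. I also add the mixed representations $s=l+j$ with $l\in[m,K]\subseteq A_n$ fixed, which only require $X_j=1$ for a single interior $j$ (singleton case). For $s\in[2K+1,3K]$, say, the number of available $j=s-l\in[K+1,n-K-1]$ is about $K-m\ge K/2$. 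Combining, each such $s$ fails with probability at most $(1-\delta)^{c'\max(K,\,\min(s-2K,2n-2K-s))}$. Summing over $s$ gives $O(\theta^K)$: the $O(K)$ terms with small distance to the ends each contribute $\theta^K$, and the rest form a geometric tail. The first group costs a factor $K$, which is absorbed by enlarging $\Theta$.

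\textbf{Differences.} For $d\in[K+1-n,n-K-1]$ and, by symmetry, $d\ge 0$: if $d\ge n-2K+m$, then $d=(n-r)-l$ with $l,r\in[m,K]$ covers $d$ up to $n-2m$. This is exactly the $L+R$ fringe region $[2m,2K]\supseteq[K+1,\ldots]$, so these $d$ are deterministic. For $d\le n-2K-1$, I use the interior pairs $(j+d,j)$ together with mixed pairs $(x,l)$, $l\in[m,K]$, $x=l+d$ interior (and also $(n-r,\,n-r-d)$). Each gives $\gtrsim K$ singleton representations, since $n\ge 4K+2$ guarantees that the needed interior indices exist. The pure interior representations give $c(n-2K-d)$ more, and the same geometric summation applies.

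\textbf{Main obstacle.} The delicate step is the transition zones ($s$ near $2K$, $d$ near $n-2K$), where purely interior representations are few. Here I must check carefully that the mixed fringe–interior representations supply $\Omega(K)$ disjoint witnesses. The hypotheses $K\ge 2m+4$ and $n\ge 4K+2$ should be exactly what makes the index ranges nonempty. The constants $D,\Theta$ then depend on $p,q$ (through $\delta$) and on $m$ (through $K-m\ge K/2$ type bounds).
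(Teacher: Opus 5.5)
Your proposal is correct and follows essentially the paper's proof. It uses the same deterministic coverage of $[K+1,2K]$, $[2n-2K,2n-K-1]$ and $[n-2K,n-K-1]$ coming from $[m,K]\subseteq L,R$, the same uniform conditional lower bound for interior pairs and singletons, mixed fringe--interior witnesses supplying $\Omega(K)$ representations in the transition zones with disjoint interior pairs elsewhere, and the same summation $\sum_{h\ge0}\lambda^{\max(K,h)}\le D\Theta^K$.

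There are two harmless slips. First, the deterministic difference range is $[n-2K,n-2m]$, starting at $n-2K$ rather than $n-2K+m$. Second, your mixed difference witnesses $(l+d)-l$ and $(n-r)-(n-r-d)$ give $\Omega(K)$ representations only for $d\ge K+1-m$; this suffices because for smaller $d$ the interior pairs already give $\Omega(n-2K-d)=\Omega(K)$ disjoint witnesses.
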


\begin{proof}
    Denote $B=[K+1,n-K-1]$. We observe that $E_{K,n}(L,R)$ fixes the endpoint blocks $[0,K]$ and $[n-K,n]$ of $A_n$.

    Similar to Lemma \ref{FullMiddleIsLikely}, for every $T\subseteq B$ with $|T|\leq 2$, there exists $\eta>0$ dependent only on $p,q$ such that conditioned on all states outside $T$,
    \begin{equation}\bbP(X_i=1\ \forall\ i\in T\mid \text{outside }T)\ \geq\  \eta.\end{equation}
    Suppose there are $M$ representations whose unfixed entries
are pairwise disjoint, with every fixed entry equal to $1$. Applying the preceding conditional bound successively to the unfixed entries gives
    \begin{equation}\bbP(\text{The sum/difference is missing}\mid E_{K,n}(L,R))\ \leq\  (1-\eta)^M.\end{equation}

    First consider the case of sums. Since $[m,K]\subseteq L$, and $K\geq 2m$, we have $[K+1,2K]\subseteq A_n+A_n$. Similarly, as $[m,K]\subseteq R$, we have $[2n-2K,2n-K-1]\subseteq A_n+A_n$. We therefore only need to treat sums in $[2K+1,2n-2K-1]$ probabilistically.

    Suppose $s\in[2K+1,2n-2K-1]$. Consider $h(s)=\min\{s-(2K+1),2n-2K-1-s\}$. We claim that $s$ has at least $c\max(K,h(s)+1)$ pairwise disjoint representations, where $c>0$ is some absolute constant.

    Suppose $s$ is within distance $K$ of the left endpoint $2K+1$, then for
each $r\in[m,K]$, we have the representation
\begin{equation}\label{SumRepresentation}
 r+(s-r) \ =\ s,
\end{equation}
    noting that $s-r\in B$. This gives at least $cK$ disjoint summand pairs. Mirroring the argument, we have at least $cK$ disjoint summand pairs if $s$ is within distance $K$ of the right endpoint $2n-2K-1$.

    Suppose that $s$ is distance $\geq K$ from each endpoint. Then we may choose $\geq ch(s)$ disjoint summand pairs of the form
    \begin{equation}i+j\ =\ s,\quad i,j\in B.\end{equation}
    This completes our claim. It follows that there exists some $0<\lambda<1$ such that for every $s\in[2K+1,2n-2K-1]$,
    \begin{equation}\bbP(s\notin A_n+A_n\mid E_{K,n}(L,R))\ \leq\  \lambda^{\max(K,h(s)+1)}.\end{equation}

    We sum over all possible values of $s$. For each value $h=h(s)$, there are at most two corresponding values of $s$, so we may write
    \begin{align}\bbP\big([K+1, 2n-K-1]\not\subseteq A_n+A_n\mid E_{K,n}(L,R)\big)\ &\leq\  2\sum_{h\geq 0}\lambda^{\max(K,h+1)}\notag\\
    & \leq\  D_1\Theta_1^K
    \end{align}
    where $D_1>0$ and $0<\Theta_1<1$ are constants depending only on $p,q,m$ (noting that we have been using $K\geq 2m$).

    For differences, by symmetry it suffices to show that we can fill the positive middle differences $[0,n-K-1]$. We note that $[m,K]$ is contained in both $L$ and $R$, so $[m,K]\cup \bigl(n-[m,K]\bigr)\subseteq A_n$. Hence
    \begin{equation}[n-2K,n-2m]\ \subseteq\  A_n-A_n.\end{equation}
    Using $K\geq 2m$, we obtain
    \begin{equation}[n-2K,n-K-1]\ \subseteq\  A_n-A_n.\end{equation}

    Thus we only need to probabilistically consider differences in $[0,n-2K-1]$. Define $g(d)=n-2K-1-d$, the distance of $d$ from the upper endpoint of the given interval. 

    If $g(d)<K$, then $d$ is close to the upper endpoint. For each $r\in[m,K]$, we have the representation
\begin{equation}\label{DiffRepresentation}
 d \ =\ (n-r)-(n-r-d),
\end{equation}

    noting that $n-r\in A_n$ and, for $j\defeq n-r-d$, $j\in B$. Hence, the possible values of $j$ give $\geq cK$ disjoint difference pairs.

    If $g(d)\geq K$, then $d$ is far away from the upper endpoint, so there are $\geq cg(d)$ disjoint difference pairs of the form
    \begin{equation}i-j\ =\ d,\quad i,j\in B.\end{equation}
    Thus there exists some $0<\lambda<1$ such that 
    \begin{equation}\bbP(d\notin A_n-A_n\mid E_{K,n}(L,R))\ \leq\  \lambda^{\max(K,g(d))}.\end{equation}
    Summing over $d$ gives the inequality
     \begin{equation}\bbP([0,n-K-1]\not\subseteq A_n-A_n\mid E_{K,n}(L,R))\ \leq\, \ \sum_{g\geq 0}\lambda^{\max(K,g)}\ \leq\, \ D_2\Theta_2^K,\end{equation}
    where $D_2>0$ and $0<\Theta_2<1$ are once again dependent only on $p,q,m$. Since $A_n-A_n$ is symmetric, the same bound works for negative differences.

    Combining, we get that for some constants $D>0$ and $0<\Theta<1$ depending only on $p,q,m$, 
    \begin{equation}\bbP(F_{K,n}^c|E_{K,n}(L,R))\ \leq\ D\Theta^K.\end{equation}
\end{proof}
We now have the tools to prove Theorem \ref{ManyMarkovSetsHaveMoreSums}. For each case, conditional on a rich fringe, the middle is likely to be full with a large enough $K$.
\begin{proof}[Proof of Theorem \ref{ManyMarkovSetsHaveMoreSums}]
    We break the theorem down into three cases, covering sum-dominant, difference-dominant, and balanced proportions respectively.

    Pick $K\geq 12$, and define the fringes
    \begin{align}
        &L^+_K\ \defeq\ \{0,2,3,7,8,9,10\}\cup[12,K] &R^+_K\ \defeq\ \{1,2,3,6,8,9,10,11\}\cup[12,K].
    \end{align}
    A direct calculation gives $(L^+_K+L^+_K)\cap[0,K]=[0,K]\setminus\{1\}$,$(R^+_K+R^+_K)\cap[0,K]=[0,K]\setminus\{0,1\}$, and $(L^+_K+R^+_K)\cap[0,K]=[0,K]\setminus\{0,7\}$.
    Hence 
    \begin{equation}\Delta(L_K^+,R_K^+)\ =\ 1.\end{equation}
    Define $E_{K,n}^+$ as the event that $A_n$ has left and right $K$-fringes $L_K^+$ and $R_K^+$. This event specifies two finite binary strings, and by asymptotic independence of the two endpoint blocks, we have
    \begin{equation}\lim_{n\to\infty}\bbP(E^+_{K,n})\ >\ 0.\end{equation}
   Lemma 2.6 applies with $m=12$. Choose a fixed $K\geq28$ large enough that $D\Theta^K\leq1/2$.
    \begin{equation}\bbP(F_{K,n}|E_{K,n}^+)\ \geq\  1/2\text{ for all }n\geq4K+2.\end{equation}
    We conclude that 
    \begin{equation}\liminf_{n\to\infty}\bbP(F_{K,n}\cap E_{K,n}^+)\ >\ 0.\end{equation}
    On the event, $F_{K,n}\cap E_{K,n}^+$, sums dominate, so
    \begin{equation}\rho_+\ >\ 0.\end{equation}

    We make the same argument for difference-dominant and balanced cases. For difference-dominance, we pick $K\geq 5$ and consider the fringes
    \begin{align}
        &L_K^-\ \defeq\ \{0,2\}\cup[5,K]&R_K^-\ \defeq\ \{0,1\}\cup [5,K].
    \end{align}
    This gives $(L_K^-+L_K^-)\cap[0,K]=[0,K]\setminus\{1,3\}$, $(R_K^-+R_K^-)\cap[0,K]=[0,K]\setminus\{3,4\}$, and $(L_K^-+R_K^-)\cap[0,K]=[0,K]\setminus\{4\}$, so
    \begin{equation}\Delta(L_K^-,R_K^-)\ =\ -2.\end{equation}
    Let $E_{K,n}^-$ be the corresponding fringe event. Using an identical argument as above, except with $m=5$ instead, for a sufficiently large fixed $K$ we have
    \begin{equation}\liminf_{n\to\infty}\bbP(F_{K,n}\cap E_{K,n}^-)\ >\ 0\end{equation}
    and since differences dominate in this event,
    \begin{equation}\rho_-\ >\ 0.\end{equation}

    Finally, we consider balanced fringes
    \begin{equation}L_K^0\ \defeq\ [0,K], \qquad R_K^0\ \defeq\ [0,K].\end{equation}
    We obtain $\Delta(L_K^0,R_K^0)=0$. Once again denoting corresponding fringe event $E_{K,n}^0$, we use the same argument with $m=0$ to show that for a sufficiently large fixed $K$,
    \begin{equation}\liminf_{n\to\infty}\bbP(F_{K,n}\cap E_{K,n}^0)\ >\ 0\end{equation}
    and since we have the balanced case in this event,
    \begin{equation}\rho_0\ >\ 0.\end{equation}
\end{proof}
\section{Differences dominate on average}

\begin{theorem}\label{Expected difference}
    Let $A_n\subseteq S \defeq \{0,\ldots,n\}$ be generated by a Markov process $\{X_i\}$. As $n \to \infty$,
    \begin{equation}
        \E\big[\lvert A_n-A_n\rvert - \lvert A_n+A_n\rvert\big] \to \Lambda(p,q)\ \defeq\  \frac{2(1-q)(1+p-p^2-pq)}{(1-p)^2(p+q)(2-p-q)}.
    \end{equation}

\end{theorem}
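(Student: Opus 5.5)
We write $|A_n-A_n|-|A_n+A_n|$ as a sum of \emph{missing} elements and compute the expected number missed at each fringe. Since $A_n+A_n\subseteq[0,2n]$ and $A_n-A_n\subseteq[-n,n]$, both ambient intervals have $2n+1$ elements. Hence
\begin{equation*}
|A_n-A_n|-|A_n+A_n| \ =\ \sum_{s=0}^{2n}\mathbf 1[s\notin A_n+A_n]\ -\ \sum_{d=-n}^{n}\mathbf 1[d\notin A_n-A_n],
\end{equation*}
and by linearity we need $\sum_s \bbP(s\notin A_n+A_n)-\sum_d\bbP(d\notin A_n-A_n)$. The middle contributions are handled exactly as in Lemma \ref{FullMiddleIsLikely}: a sum $s$ is missed with probability at most $\theta_1^{\min(s+1,2n-s+1)}$, and similarly for differences. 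The series therefore converge uniformly in $n$, and by dominated convergence we may take $n\to\infty$ termwise. Asymptotic independence of the two endpoint blocks (Theorem \ref{ConvergenceTheorem}) then reduces everything to one-sided infinite stationary chains. The left block of $A$ is a stationary chain $(X_i)_{i\ge0}$. The reflected right block $R$ is the time reversal, which for a two-state chain is again the same stationary chain, by reversibility. Let $L,R$ be independent copies of this infinite random set. The limit is
\begin{equation*}
\Lambda \ =\ 2\sum_{k\ge0}\bbP(k\notin L+R)\ -\ 2\sum_{k\ge0}\bbP(k\notin L+L).
\end{equation*}
Here the factor $2$ on differences comes from $\pm$ symmetry. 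The factor $2$ on sums uses that $R+R$ has the same law as $L+L$. The difference $0$ is always present once $A\neq\emptyset$, and emptiness has exponentially small probability.

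The key simplification is the following. The event $k\notin L+R$ (resp.\ $k\notin L+L$) depends only on $X_0,\dots,X_k$ (resp.\ on those states and an independent copy). For $L+L$, the event $k\notin L+L$ says that the word $X_0\cdots X_k$ contains no pair $i,k-i$ both equal to $1$. In other words, the word and its reversal have disjoint supports. For $L+R$ the same condition is imposed on two independent words $X,Y$, with $X_i=Y_{k-i}=1$ forbidden. This is a pairwise condition between position $i$ and position $k-i$. I would compute both probabilities by a transfer matrix that runs from the outside in, pairing coordinates $i$ and $k-i$ simultaneously.
\begin{itemize}
\item For $L+R$, the pair process $(X_i,Y_{k-i})$ is a Markov chain on $\{0,1\}^2$, since $Y$ reversed is Markov with the same $P$. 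Forbidding $(1,1)$ gives a substochastic $3\times3$ matrix $Q$, and $\bbP(k\notin L+R)=\pi^{\otimes2}Q^k\mathbf 1$ (restricted).
\item For $L+L$, we fold the word at its centre: the pair $(X_i,X_{k-i})$ evolves for $i<k/2$ as a two-sided chain with kernel $P\otimes P^{\mathsf T}$, where $P^{\mathsf T}$ is the reversed kernel ($=P$). Forbidding $(1,1)$ gives the same $Q$, but with a parity-dependent gluing at the middle ($k$ even: the central state must be $0$; $k$ odd: the two central states are adjacent and linked by one $P$-step).
\end{itemize}
Summing the geometric series gives closed forms like $v^{\mathsf T}(I-Q)^{-1}w$ and $v^{\mathsf T}(I-Q^2)^{-1}w'$.

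Finally, I would evaluate these resolvents symbolically in $p,q$, using the stationary weights $\pi=((1-q),(1-p))/(2-p-q)$, and simplify to $\Lambda(p,q)$. As a sanity check, at $p=q=1/2$ this should recover the Martin–O'Bryant value $4$. Indeed
\begin{equation*}
\Lambda(1/2,1/2)\ =\ \frac{2\cdot\tfrac12\cdot1}{\tfrac14\cdot1\cdot1}\ =\ 4.
\end{equation*}
The main obstacle is the $L+L$ term. The self-convolution creates the parity-dependent central gluing, so the even and odd $k$ series must be summed separately and recombined. Getting this boundary bookkeeping and the $\pi$-weighted initial vectors right, so that the rational function collapses to the stated factorised form, is where I expect most of the work. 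The justification of the limit interchange, by contrast, is routine given Lemma \ref{FullMiddleIsLikely}.
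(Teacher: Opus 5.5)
Your route is the paper's: write the gap as missing sums minus missing differences, pass to the limit termwise using the uniform geometric bounds, replace the two fringes by independent stationary copies (reversibility handles the reflected right fringe), and evaluate everything with the $3\times3$ substochastic matrix $Q$ on $\{00,01,10\}$. However, your displayed formula for $\Lambda$ has the wrong sign. Your first display is correct: missing sums enter with $+$ and missing differences with $-$. Missing sums near either end are governed by $k\notin L+L$ (or $k\notin R+R$, which has the same law). Missing differences $\pm(n-k)$ are governed by $k\notin L+R$. So the limit is $2\sum_k\bbP(k\notin L+L)-2\sum_k\bbP(k\notin L+R)$, not the reverse. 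Your sanity check only evaluated the target formula. Applied to your own series at $p=q=\tfrac12$, it gives $\sum_k(3/4)^{k+1}=3$ for $L+R$ and $\sum_m\tfrac12(3/4)^m+\sum_m(3/4)^{m+1}=2+3=5$ for $L+L$, so your expression equals $2(3-5)=-4$. The ``(resp.)'' in your key-simplification sentence is swapped the same way: it is $k\notin L+L$ that depends only on $X_0,\dots,X_k$.

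The second issue is the outside-in fold for $L+L$, which is exactly the bookkeeping you left open. Run from the outside in, each half-path carries a stationary weight at its outer end, so $\pi_{x_0}\pi_{x_k}\prod(P\otimes P)$ overcounts the centre. By reversibility, the true path weight is this times $1/\pi_{x_m}$ for $k=2m$, or times $P_{x_mx_{m+1}}/\pi_{x_{m+1}}$ for $k=2m+1$. Merely imposing $X_m=0$ would give $\nu Q^me_{00}=\pi_0^2e_{00}^TQ^m\mathbf 1$, which is off by a factor $\pi_0$. The paper avoids this by folding inside-out. The law at the centre is known exactly: $\pi_0$ for a single central $0$, and $\pi_aP_{ab}$ for the adjacent central pair. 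By reversibility, the outward pair process is then a genuine Markov chain with kernel $P\otimes P$. This gives $u_{2m}=\pi_0e_{00}^TQ^m\mathbf 1$ and $u_{2m+1}=\eta Q^m\mathbf 1$ with $\eta=(\pi_0p,\pi_0(1-p),\pi_1(1-q))$. It also gives $w_k=\nu Q^k\mathbf 1$ with $\nu=(\pi_0^2,\pi_0\pi_1,\pi_0\pi_1)$. Every series is then a multiple of $(I-Q)^{-1}\mathbf 1$, so no $(I-Q^2)^{-1}$ appears, and the limit is $2(\pi_0e_{00}^T+\eta-\nu)(I-Q)^{-1}\mathbf 1=\Lambda(p,q)$. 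Your outside-in version agrees with this once the correction factor is included, since $Q$ satisfies detailed balance with respect to $\nu$.
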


To prove this theorem, we first reduce the problem to fringe probabilities.
Let $(Y_i)_{i\geq0}$ and $(Z_i)_{i\geq0}$ be independent stationary
copies of $(X_i)_{i\geq0}$. For $k\geq0$, define
\begin{equation}\label{eq:w-definition}
    w_k \ \defeq\
    \mathbb{P}\bigl(Y_iZ_i=0 \text{ for every } 0\leq i\leq k\bigr).
\end{equation}
\begin{lemma}\label{simplyfing expected difference}
    Define $E_n\defeq\mathbb{E}[\lvert A_n-A_n\rvert - \lvert A_n+A_n\rvert\big]$. Then
    \begin{equation}
    \lim_{n \to \infty} E_n \ =\  2\sum_{k=0}^\infty (u_k - w_k).
    \end{equation}
\end{lemma}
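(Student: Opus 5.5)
We take $u_k\defeq\mathbb{P}\bigl(X_iX_{k-i}=0\text{ for every }0\le i\le k\bigr)$, the probability that $k\notin A_n+A_n$ for $k\le n$. The plan is to write both expectations as sums of \emph{missing} probabilities, match the terms near the two ends of each range with $u_k$ and $w_k$, and show that the middle terms are negligible.

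\textbf{Step 1: reduction to missing probabilities.} Since $A_n+A_n\subseteq[0,2n]$ and $A_n-A_n\subseteq[-n,n]$ both have $2n+1$ possible elements,
\begin{equation*}
E_n\ =\ \sum_{s=0}^{2n}\mathbb{P}(s\notin A_n+A_n)\ -\ \sum_{d=-n}^{n}\mathbb{P}(d\notin A_n-A_n).
\end{equation*}

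\textbf{Step 2: the sums.} For $0\le k\le n$, $k\notin A_n+A_n$ exactly when $X_iX_{k-i}=0$ for all $0\le i\le k$. By stationarity this has probability exactly $u_k$, independent of $n$. For the top sums, $2n-k\notin A_n+A_n$ exactly when $X_{n-i}X_{n-k+i}=0$ for all $i$. A two-state stationary chain is reversible, since detailed balance $\pi_0(1-p)=\pi_1(1-q)$ holds. So the time-reversal $(X_{n-i})_{i=0}^{n}$ has the same law as $(X_i)$, and this probability is also $u_k$. This gives
\begin{equation*}
\sum_{s}\mathbb{P}(s\notin A_n+A_n)\ =\ 2\sum_{k=0}^{n-1}u_k+u_n.
\end{equation*}

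\textbf{Step 3: the differences.} By symmetry of $A_n-A_n$ it suffices to treat $d=n-k\ge 0$. We have $n-k\notin A_n-A_n$ exactly when
\begin{equation*}
X_iX_{n-k+i}=0\quad\text{for all }0\le i\le k,
\end{equation*}
which is the event $(L+R)\not\ni k$ in the fringe language of Lemma \ref{FringeFormula}. Write this event as a function of the left block $(X_0,\dots,X_k)$ and the reversed right block $(X_n,\dots,X_{n-k})$. Setting $Y_i=X_i$ and $Z'_j=X_{n-j}$, the condition reads $Y_iZ'_{k-i}=0$ for all $i$.

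As $n\to\infty$ with $k$ fixed, Theorem \ref{ConvergenceTheorem} makes the two blocks asymptotically independent: the total variation error is at most $C\rho^{\,n-2k}$. The reversed block is again a stationary copy by reversibility. Re-indexing $Z_i\defeq Z'_{k-i}$ is one more reversal over $[0,k]$, which still gives a stationary copy. Hence
\begin{equation*}
\mathbb{P}(n-k\notin A_n-A_n)\ \to\ w_k\qquad\text{for each fixed }k.
\end{equation*}
The point $d=0$ is excluded only when $A_n=\emptyset$, which has exponentially small probability.

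\textbf{Step 4: passing to the limit.} This is the main obstacle, because the convergence in Step 3 is only termwise. I would split at a cutoff $K$.
\begin{itemize}
\item \emph{Middle terms.} The disjoint-representation argument of Lemma \ref{FullMiddleIsLikely} gives, uniformly in $n$,
\begin{equation*}
\mathbb{P}(n-k\notin A_n-A_n)\ \le\ \theta^{\min(k,\,n-k)}\quad\text{and}\quad u_k\le\theta_1^{k}.
\end{equation*}
The same argument applied to independent copies gives $w_k\le\theta^{k}$. So every term with index beyond $K$, and every term in the middle, contributes at most $C\theta^K$ in total.
\item \emph{Fringe terms.} For $k\le K$, Step 3 gives convergence with an error of order $K\,C\rho^{\,n-2K}\to0$.
\end{itemize}
Letting $n\to\infty$ and then $K\to\infty$ yields
\begin{equation*}
\lim_{n\to\infty}E_n\ =\ 2\sum_{k\ge0}u_k-2\sum_{k\ge0}w_k\ =\ 2\sum_{k\ge0}(u_k-w_k),
\end{equation*}
where both series converge absolutely.
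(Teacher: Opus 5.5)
Your Steps 1--3 follow the paper's proof: the reduction to missing sums and differences, the reversibility argument for the top sums, $d=0$ missing only when $A_n=\emptyset$, and the termwise limit $v_{n,k}\defeq\mathbb{P}(n-k\notin A_n-A_n)\to w_k$. The gap is in Step 4. The uniform bound you state, $\mathbb{P}(n-k\notin A_n-A_n)\le\theta^{\min(k,n-k)}$, is true but too weak for what you claim. For $k$ close to $n$, i.e.\ small positive differences $d=n-k$, it only gives $\theta^{d}$, which does not decay with $n$. The single term $k=n-1$ already contributes $\theta$, whatever $K$ is. So $\sum_{K<k<n}\theta^{\min(k,n-k)}$ does not go to $0$ as $K\to\infty$, and the claim that every term past $K$ contributes $O(\theta^K)$ in total does not follow. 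Your fringe bullet only treats $k\le K$. Hence the terms $v_{n,k}$ with $n-K<k<n$, the differences $d\in[1,K)$, are controlled by neither bullet, and you cannot conclude $\limsup_n |E_n-2\sum_k(u_k-w_k)|\to0$.

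The missing idea is a tail bound that decays in $k$ alone, uniformly in $n$. The difference $d=n-k$ has the $k+1$ representations $(j+d)-j$, $0\le j\le k$. Keeping only those with $j<d$ makes them automatically disjoint, but leaves only $\min(k+1,d)$ of them; this is presumably where your exponent comes from. When $d\le k$ the representations overlap. However, the graph on $[0,n]$ with edges $\{j,j+d\}$, $0\le j\le k$, is a disjoint union of paths, one per residue class mod $d$. A path with $m$ edges contains $\lceil m/2\rceil$ pairwise disjoint edges, so there are at least $(k+1)/2$ pairwise disjoint representations. This gives $v_{n,k}\le(1-\delta)^{(k+1)/2}$ for all $0\le k<n$, which is exactly the bound the paper uses; it is also what the count $c(n-|d|)$ in the proof of Lemma~\ref{FullMiddleIsLikely} yields, since $n-|d|=k$. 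With $u_k+v_{n,k}+w_k\le C\alpha^k$ for $0\le k<n$, your cutoff-at-$K$ argument closes just as in the paper.
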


\begin{proof}
    First, let's rewrite this problem in terms of missing sums and differences. Notice that
    \begin{equation}
    |A_n-A_n| - |A_n +A_n|\  =\  |(A_n+A_n)^c| - |(A_n-A_n)^c|
    \end{equation}
    where complements of $A_n+A_n$ and $A_n-A_n$ are taken in $[0,2n]$ and $[-n,n]$, respectively. By linearity of expectation, we obtain
    \begin{equation}\label{MissingSumsDiffs}
        E_n \ =\  \E[\textbf{missing sums}] - \E[\textbf{missing differences}].
    \end{equation}
    For $k\ge0$, define
\begin{equation}
u_k\ =\ \mathbb P(k\notin A_k+A_k).
\end{equation}
For every $n\ge k$, this also equals
$\mathbb P(k\notin A_n+A_n)$, since only $X_0,\ldots,X_k$
determine whether $k$ is present. By stationarity and
reversibility,
\begin{equation}
\mathbb P(2n-k\notin A_n+A_n)\ =\ u_k,
\qquad 0\le k\le n.
\end{equation}
    Let's define the probability that $n-k$ is missing in $A_n - A_n$, for $0 \le k \le n-1$, as
    \begin{equation}
    v_{n,k}\ \defeq\ P(n-k \notin A_n - A_n).
    \end{equation}
    The difference $n-k$ is missing exactly when there is no $i \in [0,k]$ such that
    \begin{equation}
    X_i\ =\ X_{n-k+i}\ =\ 1.
    \end{equation}
    Note that the difference $0$ is missing only if $A_n = \emptyset$. Hence
    \begin{equation}
    P(0 \notin A_n - A_n)\ =\ P(A_n = \emptyset)\ =\ \pi_0p^n.
    \end{equation}
    The probability of not including the sum $k$ is $u_k$, and likewise for $2n-k$. The probability of not including $n-k$ in the difference set is $v_{n,k}$, and likewise for $k-n$. Therefore
    \begin{equation}\label{eqn:FiniteExpectIdentity}
    E_n\ =\ 2\sum_{k=0}^{n-1}(u_k - v_{n,k}) + u_n - \pi_0 p^n.
    \end{equation}

For fixed $k$, the blocks $(X_0,\ldots,X_k)$ and
$(X_{n-k},\ldots,X_n)$ become independent as $n\to\infty$.
Hence $v_{n,k}\to w_k$.
We now bound the tails uniformly in $n$. Let $\delta$ be
the conditional lower bound from Lemma \ref{FullMiddleIsLikely}. There are
$\lfloor k/2\rfloor+1$ disjoint representations of the
sum $k$, allowing the singleton pair when $k$ is even.
For $0\le k<n$, put $d=n-k>0$. The graph with edges
$\{i,i+d\}$, $0\le i\le k$, is a union of paths, so it
contains at least $(k+1)/2$ disjoint edges. Therefore
\begin{equation}
u_k\ \le\ (1-\delta)^{(k+1)/2},\qquad
v_{n,k}\ \le\ (1-\delta)^{(k+1)/2}.
\end{equation}
Taking $n\to\infty$ gives the same bound for $w_k$.
Thus, for some $C>0$ and $0<\alpha<1$,
\begin{equation}
u_k+v_{n,k}+w_k\le C\alpha^k,\qquad 0\le k<n.
\end{equation}
For $n>K$, it follows that
\begin{equation}
\left|\sum_{k=K+1}^{n-1}(u_k-v_{n,k})\right|
\ \le\  C\sum_{k>K}\alpha^k.
\end{equation}
Set $S\defeq2\sum_{k=0}^{\infty}(u_k-w_k)$. The preceding geometric bounds
show that this series converges absolutely. By
Equation \eqref{eqn:FiniteExpectIdentity}, for $n>K$,
\begin{equation}\label{eq:double-limit-bound}
\begin{aligned}
 |E_n-S|
 &\ \leq\ 2\sum_{k=0}^{K}|v_{n,k}-w_k|
 +2\sum_{k=K+1}^{n-1}(u_k+v_{n,k})\\
 &\qquad +2\sum_{k=K+1}^{\infty}(u_k+w_k)
 +u_n+\pi_0p^n.
\end{aligned}
\end{equation}
For fixed $K$, the first sum tends to zero as $n\to\infty$, since
$v_{n,k}\to w_k$ for each $k$. Also, $u_n+\pi_0p^n\to0$. The geometric
bounds on the remaining sums give a constant $C'>0$, independent of
$K$, such that
\begin{equation}\label{eq:double-limit-tail}
 \limsup_{n\to\infty}|E_n-S|
 \ \leq\ C'\sum_{k=K+1}^{\infty}\alpha^k
 \ =\ \frac{C'\alpha^{K+1}}{1-\alpha}.
\end{equation}
Letting $K\to\infty$, we obtain
\begin{equation}\label{eq:expectation-series-limit}
 \lim_{n\to\infty}E_n \ =\ 2\sum_{k=0}^{\infty}(u_k-w_k).
\end{equation}

\end{proof}

\begin{lemma}\label{counting manipulation of expected}
Define the matrix
\begin{equation}
Q = \begin{pmatrix}
    p^2 &p(1-p) & p(1-p)\\
    p(1-q) & pq & (1-p)(1-q)\\
    p(1-q) & (1-p)(1-q) & pq
\end{pmatrix}
\end{equation}
and vectors
\begin{equation}
e_{00} = \begin{pmatrix}
    1\\
    0\\
    0
\end{pmatrix}, \quad 
\textbf{1} = \begin{pmatrix}
    1\\
    1\\
    1
\end{pmatrix}, \quad \nu = (\pi_0^2, \pi_0\pi_1, \pi_0\pi_1), \quad \eta = (\pi_0 p, \pi_0(1-p), \pi_1 (1-q)).
\end{equation}
Then
\begin{equation}
u_{2m} = \pi_0e_{00}^T Q^m \textbf{1}, \quad u_{2m+1} = \eta Q^m \textbf{1},  \quad w_k = \nu Q^k \textbf{1}.
\end{equation}

\end{lemma}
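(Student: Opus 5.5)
The plan is to realize each of the three probabilities as the total mass of a killed Markov chain on pairs of states. The killed chain lives on the state space $\{00,01,10\}$, ordered this way, with the forbidden pair $11$ removed; $Q$ is exactly the product kernel $P\otimes P$ restricted to these three states. Concretely, for $(a,b),(a',b')\in\{00,01,10\}$ the $(ab,a'b')$ entry of $Q$ should be $P(a,a')P(b,b')$. I will check this against the displayed matrix entry by entry. For example, the entry from $01$ to $01$ is $P(0,0)P(1,1)=pq$, and the entry from $01$ to $10$ is $P(0,1)P(1,0)=(1-p)(1-q)$. Once this is verified, every formula in the lemma follows from a path expansion in which each admissible history contributes a product of $Q$-entries and the final $\mathbf 1$ sums over end states.

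\textbf{The formula for $w_k$.} The pair process $W_i\defeq(Y_i,Z_i)$ is a Markov chain on $\{0,1\}^2$ with kernel $P\otimes P$ and initial law $\pi\otimes\pi$. The event in \eqref{eq:w-definition} says that $W_0,\dots,W_k$ all avoid $11$. Expanding over the admissible paths gives
\[
w_k=\sum_{s_0,\dots,s_k\neq 11}(\pi\otimes\pi)(s_0)\prod_{i=1}^{k}(P\otimes P)(s_{i-1},s_i)=\nu Q^k\mathbf 1 ,
\]
since $\nu$ is $\pi\otimes\pi$ restricted to $\{00,01,10\}$.

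\textbf{The formulas for $u_k$: folding the segment.} The event $k\notin A_k+A_k$ means that there is no $i\in[0,k]$ with $X_i=X_{k-i}=1$. I fold the segment $[0,k]$ at its center and track the pair $(X_{\text{left}},X_{\text{right}})$ as it moves outward.

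The key input is that a stationary two-state chain is reversible. Indeed, detailed balance $\pi_0(1-p)=\pi_1(1-q)$ holds by the formula for $\pi$. Reversibility lets me rewrite the law of $(X_0,\dots,X_k)$ as the product of two things:
\begin{itemize}
\item the stationary probability of the central block;
\item forward $P$-transitions moving rightward from the center and $P$-transitions moving leftward, since the reversed kernel equals $P$.
\end{itemize}
Concretely, $\pi_{x_0}\prod P(x_{i-1},x_i)$ is re-expressed by repeatedly applying $\pi_{x_{i-1}}P(x_{i-1},x_i)=\pi_{x_i}P(x_i,x_{i-1})$ to the left half. Consequently the outward pair process is itself Markov with kernel $P\otimes P$, and the two halves are conditionally independent given the current pair.

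\emph{Even case.} For $k=2m$, the center $X_m$ must equal $0$, because the singleton representation $m+m=k$ must fail. This has probability $\pi_0$ and starts the pair process in state $00$, i.e.\ at $e_{00}$. The $m$ outward pairs $(X_{m-j},X_{m+j})$, $j=1,\dots,m$, must avoid $11$, which yields $u_{2m}=\pi_0e_{00}^TQ^m\mathbf 1$.

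\emph{Odd case.} For $k=2m+1$, the central pair $(X_m,X_{m+1})$ has law $\pi_aP(a,b)$ and must avoid $11$. This gives precisely the vector $\eta$: the entries are $\pi_0p$ for $00$, $\pi_0(1-p)$ for $01$, and $\pi_1(1-q)$ for $10$. Then $m$ outward steps to the pairs $(X_{m-j},X_{m+1+j})$ yield $u_{2m+1}=\eta Q^m\mathbf 1$.

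\textbf{Main obstacle.} The main care point is the reversibility and re-rooting step, which justifies treating the left-moving coordinate as a forward $P$-chain started from the central state. A related bookkeeping point is keeping the orientation consistent: the first coordinate is always the left or $Y$ one, so that the off-diagonal entries of $Q$ between $01$ and $10$ come out as $(1-p)(1-q)$. I would write the path-sum identity explicitly once and then read off all three formulas from it.
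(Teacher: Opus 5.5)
Your proposal is correct and takes essentially the same approach as the paper. Both identify $Q$ as the product kernel $P\otimes P$ killed at the pair state $11$. Both use detailed balance ($P^*=P$) to fold $[0,k]$ at its center, so the outward pair process is Markov with kernel $P\otimes P$, started at $00$ with weight $\pi_0$ in the even case and with law $\eta$ in the odd case. Both then read off $w_k=\nu Q^k\mathbf 1$ from the independent pair chain $(Y_i,Z_i)$. Your explicit path-sum factorization of $\pi_{x_0}\prod P(x_{i-1},x_i)$ is a slightly more careful rendering of the paper's conditional-transition step.
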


\begin{proof}

Let the allowed pair-state space be
\begin{equation}
\epsilon = \{00,01,10\}
\end{equation}
ordered as such. Then
\begin{equation}
Q = \begin{pmatrix}
    p^2 &p(1-p) & p(1-p)\\
    p(1-q) & pq & (1-p)(1-q)\\
    p(1-q) & (1-p)(1-q) & pq
\end{pmatrix}
\end{equation}
represents the substochastic matrix between those pair-state spaces.

The reverse transition matrix of a stationary Markov chain is defined by
\begin{equation}
P^*_{a,c}\ =\  \bbP(X_{i-1 }=c | X_{i}=a)\  =\ \frac{\pi_c P_{c,a}}{\pi_a}.
\end{equation}
For the present two-state chain,
\begin{equation}
\pi_0P_{0,1}\ =\ \frac{(1-q)(1-p)}{2-p-q}\ =\ \pi_1P_{1,0}.
\end{equation}
Thus we have the detailed-balance equations,
\begin{equation}
\pi_aP_{a,c}\ =\ \pi_cP_{c,a},
\end{equation}
which hold for every $a,c \in \{0,1\}$, and hence
\begin{equation}
P^*\ =\ P.
\end{equation}
Therefore, the chain is reversible. Now, suppose we have a finite interval from $X_l$ to $X_r$ with $X_l = a$, $X_r = b$. By the Markov property, and by expanding to the variables $X_{l-1}$ and $X_{r+1}$, it follows that
\begin{equation}
\bbP(X_{l-1} = c, X_{r+1} = d\mid X_l, X_r)\ =\ P_{a,c}P_{b,d}.
\end{equation}
Note that we're restricting to the states $00,01,10$ since, for the event $2m \notin A+A$, we need the pairs $(m,m),(m-1,m+1),\dots ,(0,2m)$ not to appear. Precisely, $(m-i,m+i)$ cannot have state $11$ for all $0\le i \le m$. Since we start counting at $(m,m)$ with state $00$, we obtain
\begin{equation}
u_{2m}\ =\ \pi_0e_{00}^T Q^m \textbf{1}.
\end{equation}

For the odd case $k = 2m+1$ to be missing, we need for all $0\le i \le m$ the pair $(m-i,m+1+i)$ to be in one of the states $00,01,10$, starting specifically from $(m,m+1)$. Hence
\begin{equation}
u_{2m+1}\ =\ \eta Q^m \textbf{1}.
\end{equation}
By asymptotic independence of the endpoint blocks,
\begin{equation}
w_k\ =\ \lim_{n\to \infty} \bbP(n - k \notin A_n - A_n).
\end{equation}
Since the blocks $Y_i$ and $Z_i$ are independent, it follows that we can use the transition matrix $Q$ with starting states $00,01,10$. Hence,
\begin{equation}
w_k\ =\ \nu Q^k \textbf{1}.
\end{equation}

\end{proof}

\begin{proof}[Proof of Theorem \ref{Expected difference}]

By Lemmas \ref{simplyfing expected difference} and \ref{counting manipulation of expected}, it follows that, as $n \to \infty$,
\begin{equation}
\lim_{n\to\infty}E_n\ =\ 2\left(\sum_{m=0}^{\infty} \pi_0e_{00}^T Q^m\textbf{1} + \sum_{m=0}^{\infty}\eta Q^m \textbf{1} - \sum_{k=0}^\infty\nu Q^k \textbf{1}\right).
\end{equation}
Since the sum of each row is either $1 - (1-p)^2 < 1$ or $1-q(1-p)$, we know that $||Q||_{\infty} < 1$; therefore, we can simplify and find
\begin{equation}
\sum_{m=0}^{\infty} Q^m\mathbf1\ =\ (I-Q)^{-1}\mathbf1\ =\ \frac{1}{(1-p)^2(p+q)}\begin{pmatrix}
    3p + q-2p^2 - 2pq\\
    p + 1-p^2 - pq\\
    p + 1-p^2 - pq 
\end{pmatrix}.
\end{equation}
Hence, as $n \to \infty$,
\begin{equation}
\lim_{n\to\infty}E_n\ =\ 2(\pi_0e_{00}^T + \eta - \nu)(I-Q)^{-1}\textbf{1}\ =\ \frac{2(1-q)(1+p-p^2-pq)}{(1-p)^2(p+q)(2-p-q)}.
\end{equation}

\end{proof}

\section{The difference formula converges very fast}
\begin{theorem}\label{ConvergeFast}
    There exists a constant $0<\lambda<1$ dependent only on $p$ and $q$ such that
    \begin{equation}E_n\ =\ \Lambda(p,q)+O(n\lambda^n).\end{equation}
\end{theorem}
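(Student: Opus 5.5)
We start from the exact finite identity \eqref{eqn:FiniteExpectIdentity},
\begin{equation*}
E_n\ =\ 2\sum_{k=0}^{n-1}(u_k-v_{n,k})+u_n-\pi_0p^n,
\end{equation*}
and compare it termwise with $\Lambda(p,q)=2\sum_{k\ge0}(u_k-w_k)$ from Lemma \ref{simplyfing expected difference} and Theorem \ref{Expected difference}. This gives
\begin{equation*}
E_n-\Lambda(p,q)\ =\ 2\sum_{k=0}^{n-1}(w_k-v_{n,k})\;-\;2\sum_{k\ge n}(u_k-w_k)\;+\;u_n-\pi_0p^n .
\end{equation*}
The tail $\sum_{k\ge n}(u_k+w_k)$ and the terms $u_n$ and $\pi_0p^n$ are $O(\alpha^n)$ by the geometric bounds already established, namely $u_k,w_k\le(1-\delta)^{(k+1)/2}$. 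It therefore remains to show
\begin{equation*}
|v_{n,k}-w_k|\ \le\ C\lambda_0^{\,n}\qquad\text{uniformly in }0\le k\le n-1 .
\end{equation*}
Summing $n$ such terms then produces the $O(n\lambda^n)$ error.

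The main step is a quantitative version of the asymptotic independence of the two endpoint blocks. The event $\{n-k\notin A_n-A_n\}$ concerns the blocks $(X_0,\dots,X_k)$ and $(X_{n-k},\dots,X_n)$, which are separated by a gap of length $n-2k$ when $k<n/2$. I would split into two cases.

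\textbf{Case $k\le n/3$.} Condition on the left block. By the Markov property, the law of the right block depends only on $X_{n-k}$, whose conditional law is $P^{n-2k}(X_k,\cdot)$. Theorem \ref{ConvergenceTheorem} gives
\begin{equation*}
\|P^{n-2k}(x,\cdot)-\pi\|_{\mathrm{TV}}\ \le\ C\rho^{\,n-2k}\ \le\ C\rho^{\,n/3}.
\end{equation*}
The right block started from $\pi$ is a stationary copy independent of the left block, which is exactly the model defining $w_k$. Hence $|v_{n,k}-w_k|\le C\rho^{n/3}$. For this two-state chain the constant is explicit: $\rho=|p+q-1|$, the second eigenvalue of $P$.

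\textbf{Case $k>n/3$.} Here $v_{n,k}$ and $w_k$ are each at most $(1-\delta)^{(k+1)/2}\le(1-\delta)^{n/6}$, so their difference is trivially $O(\lambda_1^n)$. This includes the overlapping regime $k\ge n/2$, where the coupling argument would be awkward because the two blocks share indices.

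Taking $\lambda=\max\{\rho^{1/3},(1-\delta)^{1/6},\alpha\}<1$ finishes the argument.

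The only real obstacle is making sure the comparison in the first case is legitimate. The point is that the event is a measurable function of two disjoint blocks, and replacing the conditional law of $X_{n-k}$ by $\pi$ changes the probability of that function by at most the total variation distance. One then averages over the left block. Beyond this, the argument is bookkeeping. If a sharper statement were wanted, the factor $n$ arises only from summing $n$ uniform errors. Using the finer bound $\rho^{n-2k}$ together with $w_k$ decaying like $\alpha^k$ would actually give $O(\lambda^n)$, but the stated $O(n\lambda^n)$ is all that is needed.
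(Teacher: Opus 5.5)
Your proposal is correct and follows the paper's proof essentially step for step. It uses the same triangle-inequality decomposition, the same split at $k=n/3$ with the total-variation bound $O(\rho^{\,n-2k})$ for the disjoint-block regime, and the same trivial bound $v_{n,k}+w_k=O(\alpha^k)$ for $k>n/3$ (your explicit rate $\rho=|p+q-1|$ and your remark that summing the finer bounds removes the factor $n$ are both valid refinements).
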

\begin{proof}
    We recall that 
    \begin{align}
        E_n\ &=\ 2\sum_{k=0}^{n-1}(u_k-v_{n,k})+u_n-\pi_0p^n \\
        \Lambda(p,q)\ &=\ 2\sum_{k=0}^\infty (u_k-w_k).
    \end{align}
    By the triangle inequality,
    \begin{equation}|E_n-\Lambda|\ \leq\  u_n+\pi_0p^n+2\sum_{k\geq n}(u_k+w_k)+2\sum_{k=0}^{n-1}|v_{n,k}-w_k|.\end{equation}
    By Lemma \ref{simplyfing expected difference}, we have $u_k+w_k=O(\alpha^k)$ for some $0<\alpha<1$.

    Without loss of generality, choose $\alpha\geq p$, which gives
    \begin{equation}u_n+\pi_0p^n+2\sum_{k\geq n}(u_k+w_k)\ =\ O (\alpha^n).\end{equation}
    It remains to bound $\sum_{k=0}^{n-1}|v_{n,k}-w_k|$. By Theorem \ref{ConvergenceTheorem}, we may find some fixed $0<\beta<1$ such that
    \begin{equation}|(P^j)_{ab}-\pi_b|=O(\beta^j)\end{equation}
    for any states $a,b\in\{0,1\}$.

    First suppose $0\le k\le n/3$. Conditional on
$X_0,\ldots,X_k$, the distribution of $X_{n-k}$ is
$P^{n-2k}(X_k,\cdot)$. Its total variation distance from
$\pi$ is $O(\beta^{n-2k})$. Extending the second block
by the Markov property does not increase this distance.
Since $n-2k\ge n/3$, we obtain, uniformly in $k$,
\begin{equation}
|v_{n,k}-w_k|\ =\ O(\beta^{n/3}).
\end{equation}
Hence,
\begin{equation}
\sum_{0\le k\le n/3}|v_{n,k}-w_k|
\ =\ O(n\beta^{n/3}).
\end{equation}
For $n/3<k\le n-1$, the bounds in Lemma 3.2 give
\begin{equation}
|v_{n,k}-w_k|\le v_{n,k}+w_k\ =\ O(\alpha^k),
\end{equation}
so
\begin{equation}
\sum_{\substack{0\le k\le n-1\\k>n/3}}
|v_{n,k}-w_k|\ = \ O(n\alpha^{n/3}).
\end{equation}
Taking $\lambda=\max\{\alpha^{1/3},\beta^{1/3}\}<1$
and combining with the preceding $O(\alpha^n)$ bound
completes the estimate
    \begin{equation}u_n+\pi_0p^n+2\sum_{k\geq n}(u_k+w_k)+2\sum_{k=0}^{n-1}|v_{n,k}-w_k|\ =\ O(n\lambda^n).\end{equation}
    This completes the proof.
\end{proof}
\section{Numerical simulations and future work}
We did not obtain a closed form expression for the limiting probabilities in Proposition \ref{ProportionLimitsExist}. Numerical simulations in Figure \ref{fig:Parameters} give some brief insights.
\begin{figure}[H]
\centering
\includegraphics[width=\linewidth]{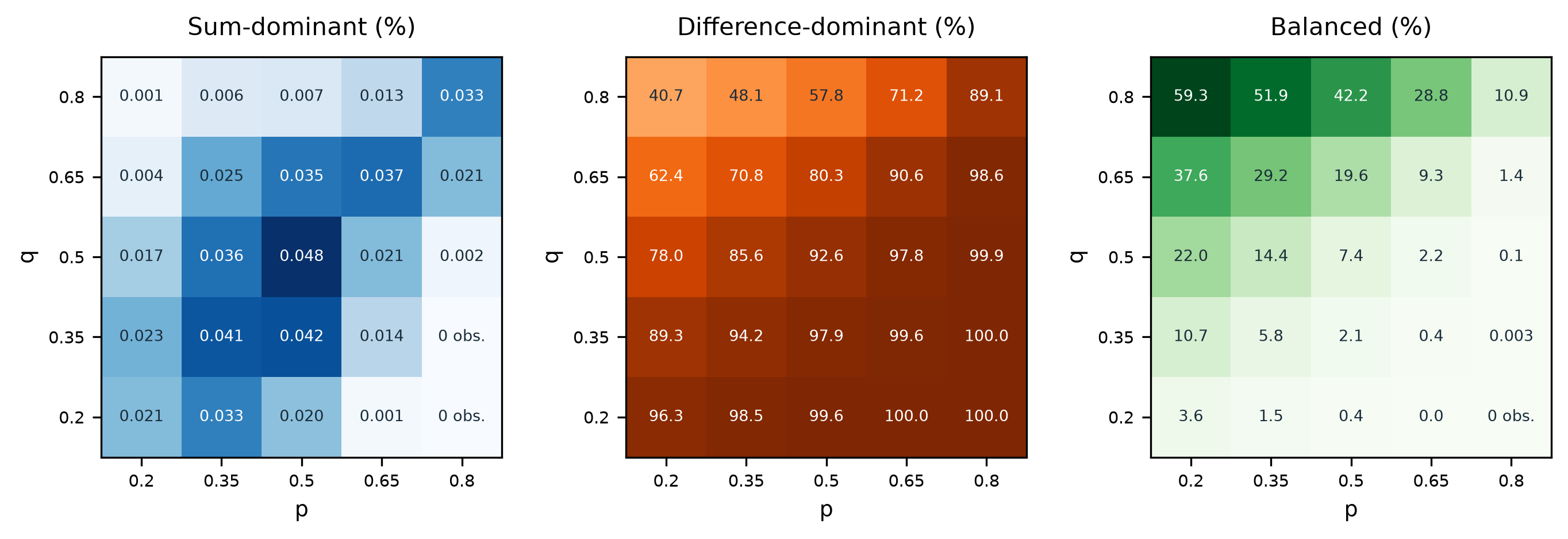}
\caption{Simulations of different $p,q$ with $n=1000$, over 100000 trials per parameter pair.}
\label{fig:Parameters}
\end{figure}

We believe that the uniformly random subset model, corresponding to $p=q=1/2$, maximizes the limiting probability of a sum-dominant set within this family. Further computations and analysis may resolve the following conjecture.
\begin{conj}
    The limiting proportion $\rho_+$ is maximized when $p=q=1/2$.
\end{conj}

We also estimate the probabilities of the three cases for several
values of $n$ and fixed parameter pairs in Figure \ref{fig:SetSize}.
\begin{figure}[H]
\centering
\includegraphics[width=\linewidth]{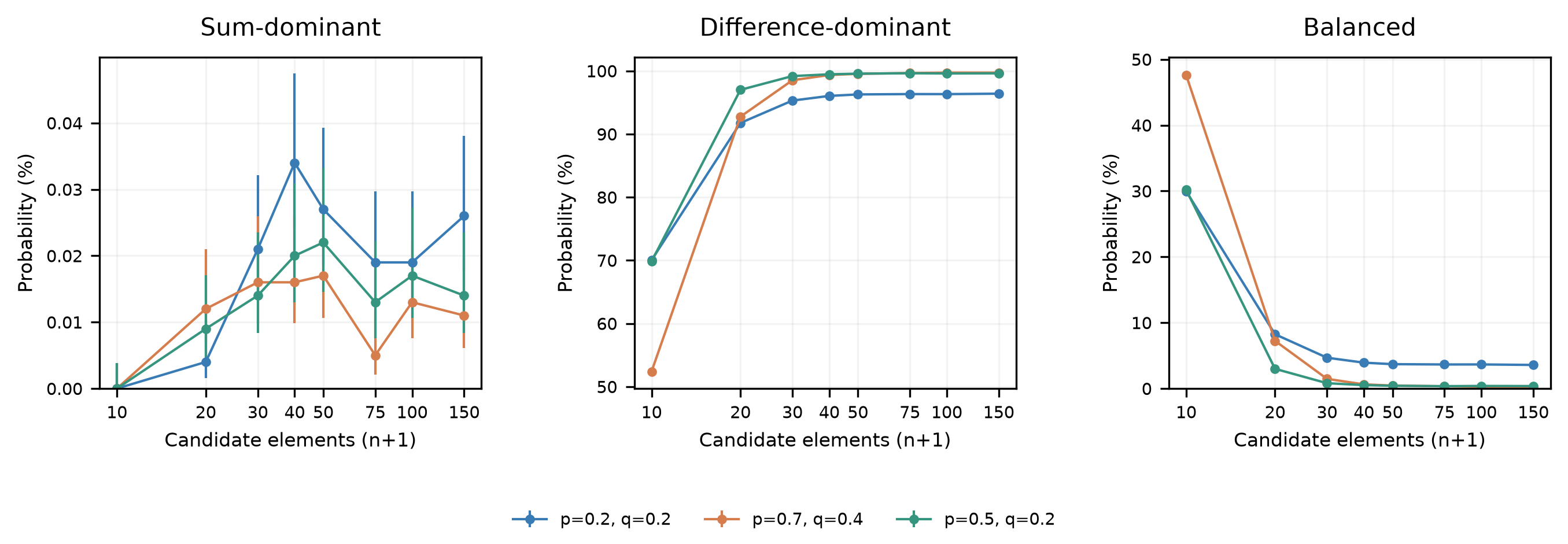}
\caption{Simulations of different $n$ with fixed $p,q$, over 100000 trials per parameter pair. Error bars show
pointwise $95\%$ Wilson confidence intervals.}
\label{fig:SetSize}
\end{figure}
These simulations illustrate the finite-size behavior of the probability pairs. A possible direction for future work is to determine
how the optimal convergence rate in Theorem \ref{ConvergeFast} depends
on $p$ and $q$.
\newpage
\section*{Acknowledgments}
This work was supported by the University of Cambridge, Williams College, the Winston Churchill Foundation,
the Finnerty Fund, and NSF Grant DMS2241623.
\printbibliography

\end{document}